\documentclass[11pt]{amsart}

\advance\hoffset by -0.5 truecm

\usepackage{amssymb}
\usepackage{amsthm}
\usepackage{amscd}
\usepackage{verbatim}
\usepackage{epsfig}
\usepackage{euscript}
\usepackage[colorlinks=true]{hyperref}
\hypersetup{urlcolor=blue, citecolor=red}
\usepackage{color}
\usepackage[shortlabels]{enumitem}

\newtheorem{Theorem}{Theorem}[section]
\newtheorem{Corollary}[Theorem]{Corollary}
\newtheorem{Lemma}[Theorem]{Lemma}
\newtheorem{Proposition}[Theorem]{Proposition}
\theoremstyle{definition}
\newtheorem{Definition}[Theorem]{Definition}

\theoremstyle{remark}
\newtheorem{remark}{Remark}[section]

\def \R{\mathbb R}
\def \N{\mathbb N}
\def \X{ \mathcal X}
\def \C{\mathbb{C}}
\def \p{\partial}

\def \O{\Omega}

\def \source{\mathcal S}
\def \rec{\mathcal R}
\newcommand{\TTD}[2]{F_{#1}^{#2}}
\def \dummy{\;\cdot \;}

\newcommand{\supp}{\operatorname{supp}}

\newcommand{\diam}{\operatorname{diam}}

\newcommand{\id}{\operatorname{I}}
\newcommand{\divg}{\operatorname{div}}

\newcommand{\Char}{\operatorname{Char}}
\newcommand{\sgn}{\operatorname{sgn}}
\newcommand{\eps}{\varepsilon}
\newcommand{\abs}[1]{\left\lvert#1\right\rvert}

\title{Determination of Wave Speed from interior sources}

\author[Ilmavirta]{Joonas Ilmavirta}
\address{J. Ilmavirta, Department of Mathematics and Statistics, University of Jyv\"askyl\"a, Jyv\"askyl\"a, 40014,  Finland} \email{joonas.ilmavirta@jyu.fi}

\author[Saksala]{Teemu Saksala}
\address{T. Saksala, Department of Mathematics\\
North Carolina State University, Raleigh\\ 
NC 27695, USA}
\email{tssaksal@ncsu.edu}

\author[Tarikere]{Ashwin Tarikere}
\address{A. Tarikere, Department of Mathematics and Statistics, University of Jyv\"askyl\"a, Jyv\"askyl\"a, 40014,  Finland} \email{ashtarik@jyu.fi}

\author[Uhlmann]{Gunther Uhlmann}
\address{G. Uhlmann, Department of Mathematics, University of Washington, Seattle, WA 98195, USA} \email{gunther@math.washington.edu }

\keywords{Inverse Problems, Wave Equation, Riemannian Manifold, Distance Function}
\subjclass{35R30, 35L05, 58J45, 86A22}

\begin{document}

\begin{abstract}
We consider the wave equation with variable wave speed in Euclidean space, with point sources modeled by Dirac delta initial displacement data. There are three special subsets of the whole space: (1) the source set where the delta initial conditions are supported, (2) the unknown set where the wave speed is not known a priori, and (3) the receiver set where the waves are measured. The inverse problem is to reconstruct the wave speed uniquely in the unknown set from an unlabeled collection of waves generated by point sources and measured in the receiver set. We use propagation of singularities and sharp finite speed of propagation to reduce this data to geometric travel-time data, whose form depends on how the three sets lie in relation to each other. We give three scenarios where this procedure leads to unique determination of the wave speed. 
\end{abstract}

\maketitle

\section{Introduction}

Is it possible to uniquely determine the wave speed inside the Earth from passive measurements of earthquakes?
We set this problem up for scalar waves and with earthquakes modeled as delta initial conditions, and in this setting we show that uniqueness does indeed hold \emph{without any geometric assumptions} if we have full data (Corollary~\ref{cor:interior_sources_enclosure_measurement}).

Most inverse problems considered in wave-based imaging have used active measurements:
the measurer is free to send any wave from the boundary and can measure the resulting responses, and all this data is encoded in the hyperbolic Dirichlet-to-Neumann map.
For us, however, no such map is available due to the passive nature of the setup.
Unable to construct special solutions with desired properties, we have to do microlocal analysis differently, but we are rewarded by the powerful interior source data that makes the resulting geometric inverse problem more accessible.

Our pivotal result, Theorem~\ref{thm:reduction}, is a reduction from analytic data to geometric data.
With different layouts of the unknown set (where the speed is not known a priori), the source set (where the earthquakes are set off), and the receiver set (where the waves are observed) one gets different geometric inverse problems.
In Corollary~\ref{cor:interior_sources_enclosure_measurement} the data we get is the boundary distance function, in Corollary~\ref{cor:enclosure_sources_enclosure_measurement} we get the boundary distance data, and in Corollary~\ref{cor:interior_sources_partial_measurement} we get partial boundary distance functions. 
All these lead to unique determination of the speed under suitable geometric assumptions, notably with no assumptions in the first case of the three.

To set things up precisely, let $M>1$, $n\in \{2,3,\ldots\}$, and $c \in C^\infty(\R^n)$ be such that 
\begin{equation}
\label{eq:uniform_sound_speed}
M > c(x)>\frac{1}{M} >0 \qquad \text{for all } x\in \R^n.
\end{equation}
 
We choose nonempty open sets $\source, \rec \subset \R^n$, called the \emph{source} and \emph{receiver} sets, respectively. For each $y\in \R^n$ and $A \in \R \setminus \{0\}$, we let $u^{y,A}$ be the solution of the hyperbolic initial value problem 
\begin{equation}\label{ivp}
    \begin{cases}
    (\partial_t^2-c(x)^{\alpha}\divg_x(c(x)^{2-\alpha}\nabla_x))u(t,x) = 0, & (t,x) \in \R\times \R^n, \\
    u(0,x) = A\delta_{y}, &  x \in \R^n,\\
    \partial_t u(0,x) = 0, & x\in \R^n.
    \end{cases}
\end{equation}
Here, $\alpha \in \R$ is a fixed parameter.
The choice $\alpha = 2$ gives the usual wave operator for the wave speed $c\in C^\infty(\R^n)$, while $\alpha =n$ gives the Riemannian wave operator for a conformally Euclidean metric.
The principal symbol of this hyperbolic operator is independent of $\alpha$.

For a fixed $T>0$ we make the wave-based measurements 
\begin{equation}
\label{eq:wave_data_def}
D_c^{\source, \rec, T} = \{u^{y,A}|_{(0,T)\times \rec}\ : \ y \in \source, \ A \in \R\setminus \{0\}\}, 
\end{equation}
and consider the following inverse problem: \emph{Let $\Omega \subset \R^n$ be a bounded domain with smooth boundary ($n\geq 2$). Under which geometric conditions for $T,$ $\O, \source,$ and $\rec$ do the data $D_c^{\source, \rec, T}$ uniquely determine the wave speed $c$ inside $\Omega$?}

Henceforth, we let $e$ denote the standard Euclidean metric on $\R^n$, and all norms and inner products in $\R^n$, unless otherwise indicated, will be Euclidean. Given a wave speed function $c\in C^\infty(\R^n)$, we write
\begin{equation}  
\label{eq:conf_euc_metric}
g_c = c^{-2}(x) e
\end{equation}
for the conformally Euclidean Riemannian metric corresponding to $c$. Observe that if $c$ satisfies \eqref{eq:uniform_sound_speed} for some $M>1$, then $g_c$ is a complete Riemannian metric on $\R^n$. We also use the notation $d_c$ for the distance function associated to the metric $g_c$. 

We define the \emph{travel-time function} of the metric $g_c$ as
\begin{equation}
\begin{split}
    \label{eq:TTD}
    &\TTD{c}{\source, \rec}\colon \source \to C(\rec),
    \\
    &\TTD{c}{\source, \rec}(y)=d_{c}(y,\dummy)\big|_{\rec}.
\end{split}
\end{equation}
This function measures the Riemannian distance $d_c$ with respect to $g_c$ from $y$ to every $z \in \rec$, and its image set $\TTD{c}{\source, \rec}(\source)$ is called the \emph{travel-time data}. 

\begin{Theorem}\label{thm:reduction}
    Let  $M>1$, $n\in \{2,3,\ldots\}$, and $\source,\rec \subset \R^n$ be nonempty open sets. If $c_1, c_2 \in C^\infty(\R^n)$ satisfy estimate \eqref{eq:uniform_sound_speed} and $T>M\diam_e(\source \cup \rec)$
    then 
    \begin{equation}
        \label{eq:wave_data}
        D_{c_1}^{\source, \rec, T}=D_{c_2}^{\source, \rec,T}
    \end{equation}
    implies
\begin{equation}
    \label{eq:TTT_agree}
\TTD{c_1}{\source, \rec}(\source)=\TTD{c_2}{\source, \rec}(\source).
\end{equation}
\end{Theorem}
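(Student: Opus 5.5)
The plan is to show that each measured wave $u^{y,A}|_{(0,T)\times\rec}$ determines the function $d_c(y,\dummy)|_\rec$ intrinsically, without knowing $y$ or $c$. Once this is done, equal data sets give equal sets of travel-time functions. Concretely, I will prove that for every $y\in\source$ and $z\in\rec$,
\begin{equation*}
d_c(y,z)=\sup\{t\in(0,T) : u^{y,A}(s,\dummy)=0 \text{ near } z \text{ for all } s\in(0,t)\}.
\end{equation*}
The right-hand side is a functional of the restricted wave alone. Here "$u=0$ near $z$ on $(0,t)$" means $u$ vanishes as a distribution on $(0,t)\times U$ for some neighbourhood $U$ of $z$.

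The first step is to check that the time window is adequate. By \eqref{eq:uniform_sound_speed}, $d_c(y,z)\le M|y-z|\le M\diam_e(\source\cup\rec)<T$, so the arrival time lies strictly inside $(0,T)$.

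The second step is the inequality "$\ge$", which comes from finite speed of propagation. The principal symbol is $\tau^2-c^2|\xi|^2$, independent of $\alpha$, and the initial data are supported at $\{y\}$. Standard energy (domain of dependence) estimates for this operator give $\supp u^{y,A}(t,\dummy)\subset\{x: d_c(y,x)\le t\}$. Hence $u$ vanishes near $z$ for $t<d_c(y,z)$.

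The third step is the inequality "$\le$", which is sharp finite speed of propagation and is the main obstacle. We must show that $u$ does not vanish in any neighbourhood of $(t,z)$ with $t$ slightly larger than $d_c(y,z)$. Equivalently, $(d_c(y,z),z)$ lies in $\supp u$, so $u$ is nonzero at some times arbitrarily close to $d_c(y,z)$ near $z$.

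I would first try propagation of singularities. Since $A\neq0$, the initial data $A\delta_y$ have wave front set equal to all of $T^*_y\R^n\setminus0$, and the parametrix for $\cos(t\sqrt{P})$ shows that $\operatorname{WF}(u)$ contains the bicharacteristics leaving $(0,y)$ in all directions. If $z$ is joined to $y$ by a minimizing geodesic $\gamma$ of $g_c$ and $z$ is not conjugate to $y$ along $\gamma$ (for instance, a short geodesic), then $u$ is singular at $(d_c(y,z),z)$. This already gives the claim at such points.

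In general $z$ may be a cut or conjugate point, where singularities from different geodesics could cancel. I would avoid this issue with a density argument. Points $z'$ near $z$ lying on minimizing geodesics from $y$ before the cut locus are dense: the cut locus has empty interior, and $\gamma$ restricted to $[0,d_c(y,z)-\epsilon]$ is minimizing and conjugate-free. At such points $u$ is singular at time $d_c(y,z')$. As $z'\to z$, continuity of $d_c(y,\dummy)$ gives singular points of $u$ at times arbitrarily close to $d_c(y,z)$, inside any neighbourhood of $z$.

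An alternative for this step is a unique continuation argument of Tataru type, which gives the sharp support property directly. I would keep the singularity argument as the primary route.

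The final step is to conclude. Given $u\in D^{\source,\rec,T}_{c_1}$ coming from a source $y$, equality of the data sets gives some $y'\in\source$ and $A'$ with $u=u_2^{y',A'}|_{(0,T)\times\rec}$. The displayed formula, applied once for $c_1$ and once for $c_2$, yields $d_{c_1}(y,\dummy)|_\rec=d_{c_2}(y',\dummy)|_\rec$. The argument is symmetric in $c_1$ and $c_2$, which gives \eqref{eq:TTT_agree}.
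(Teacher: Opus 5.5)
Your proposal is correct and is essentially the paper's argument. Lemma~\ref{sharp} gives the lower bound, and the wave front set description of Proposition~\ref{prop:llc} gives the upper bound as in Proposition~\ref{prop:first-arrival}; the unlabeled data are then matched exactly as you do. (Your parametrix sketch for the initial singularity is the alternative proof in Appendix~\ref{app:FIO-proof}; the main text instead pulls $u^{y,1}$ back to $\{t=0\}$ and uses time-reversal symmetry.)

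Your cut/conjugate-point density detour is valid, since interior points of a minimizing geodesic are never conjugate to $y$, but it is unnecessary. Propagation of singularities (Lemma~\ref{propa}) acts in $T^*(\R\times\R^n)$ with no conjugacy restriction, and distinct geodesics reaching $z$ at time $d_c(y,z)$ carry distinct covectors. So nothing can cancel, and the lift of the minimizing geodesic itself already gives a point of $WF(u^{y,1})$ over $(d_c(y,z),z)$.
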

The proof proceeds by showing that for every $y \in \source$ and $z\in \rec$ the distance $d_{c_i}(y,z)$ is precisely the time it takes for the wave $u_i^{y,A}$ to arrive at $z$, where $u_i^{y,A}$ is the unique solution of \eqref{ivp} with $c=c_i$. We note that the equation \eqref{eq:wave_data} does not mean that for each $(y,A)$ the waves $u^{y,A}_1$ and $u^{y,A}_2$ agree in $(0,T)\times \rec$, but rather that for each $(y_1,A_1)$ there is $(y_2,A_2) $ such that the waves $u^{y_1,A_1}_1$ and $u^{y_2,A_2}_2$ agree in $(0,T)\times \rec$. Similarly,
\eqref{eq:TTT_agree} does not mean that
\[
d_{c_1}(y,z) = d_{c_2}(y,z) \qquad \text{for all }y \in \source, z \in \rec
\]
but rather that for each $y_1 \in \source$, there exists $y_2\in \source$ such that $d_{c_1}(y_1,z) = d_{c_2}(y_2,z)$ for all $z \in \rec$. Furthermore, Theorem~\ref{thm:reduction} does not need any geometric assumptions beyond \eqref{eq:uniform_sound_speed}, showing that the PDE-based inverse problem for the waves \eqref{eq:wave_data}, can be always reduced to a geometric inverse problem for travel times \eqref{eq:TTT_agree}. 

In order to recover the wave speed in the domain $\O$ from \eqref{eq:TTD} we need to impose geometric conditions for $T$, $\O$, $\source$, and $\rec$. We show that there are several cases when this is possible, and note that this is hopeless if for instance no distance-minimizing geodesic from $\source$ to $\rec$ goes through $\O$. 
The common feature in all of the positive cases is that we require that the wave speeds agree outside the set $\O$.

\begin{Corollary}
\label{cor:interior_sources_enclosure_measurement}
Let  $M>1$, $n\in \{2,3,\ldots\}$, $\epsilon>0$, and $\O \subset \R^n$ be a smooth bounded domain. Let $\source=\O$ and $\rec=B_e(\Omega,\epsilon)\setminus \bar \Omega$. 
    Suppose $c_1, c_2 \in C^\infty(\R^n)$ satisfy the estimate \eqref{eq:uniform_sound_speed} and $c_1|_{\R^n\setminus \Omega} = c_2|_{\R^n\setminus \Omega}$. 
    If $T>M\diam_e(\rec)$ and $D_{c_1}^{\source, \rec, T}=D_{c_2}^{\source, \rec,T}$ then $c_1\equiv c_2$.
\end{Corollary}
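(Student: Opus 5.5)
The plan is to combine Theorem~\ref{thm:reduction} with the classical uniqueness result for the boundary distance functions, and then use conformal rigidity to remove the isometry ambiguity.

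\emph{Step 1: reduce to travel-time data.} Since $\rec=B_e(\Omega,\epsilon)\setminus\bar\Omega$ surrounds $\Omega=\source$, every point of $\source\cup\rec\subset B_e(\Omega,\epsilon)$ lies on a segment joining two points near the outer boundary of $\rec$. Hence $\diam_e(\source\cup\rec)=\diam_e(\rec)$, so the hypothesis $T>M\diam_e(\rec)$ is exactly the hypothesis of Theorem~\ref{thm:reduction}. We therefore obtain $\TTD{c_1}{\source,\rec}(\source)=\TTD{c_2}{\source,\rec}(\source)$ as sets of functions on $\rec$.

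\emph{Step 2: pass to boundary distance functions.} For $y\in\Omega$ the function $d_{c_i}(y,\dummy)$ is Lipschitz on $\R^n$, and every $p\in\partial\Omega$ is a limit of points of $\rec$. Hence $d_{c_i}(y,\dummy)|_{\rec}$ determines $r^i_y:=d_{c_i}(y,\dummy)|_{\partial\Omega}$ by continuity. We conclude that the sets $\{r^1_y:y\in\Omega\}$ and $\{r^2_y:y\in\Omega\}$ in $C(\partial\Omega)$ coincide. Taking closures in the sup norm (the map $y\mapsto r^i_y$ is Lipschitz) gives equality of the boundary distance data $R_i(\bar\Omega)=\{r^i_y:y\in\bar\Omega\}$.

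\emph{Step 3: the main obstacle, recovering $c$ up to isometry.} Here we need the Kurylev-type theorem: the boundary distance data determine $(\bar\Omega,g)$ up to an isometry that is the identity on $\partial\Omega$, with no geometric assumptions. The subtlety is that $d_{c_i}$ is the distance of $(\R^n,g_{c_i})$, not the intrinsic distance of $\bar\Omega$, since minimizing curves may leave $\Omega$. I would rerun the standard argument adapted to this setting, using that the metrics agree outside $\Omega$:
\begin{enumerate}[(a)]
\item Show that $y\mapsto r_y$ is a homeomorphism onto its image. Injectivity uses short geodesics from $y$ hitting $\partial\Omega$ and the failure of further minimization past cut points.
\item Show that the smooth structure is recovered from local coordinates of the form $y\mapsto(r_y(p_1),\dots,r_y(p_n))$.
\item Show that the metric is recovered from the gradients of the functions $r_\cdot(p)$, which have unit length wherever they are smooth.
\end{enumerate}
This yields a diffeomorphism $\Phi\colon\bar\Omega\to\bar\Omega$ with $\Phi^*g_{c_2}=g_{c_1}$ and $\Phi|_{\partial\Omega}=\mathrm{id}$.

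\emph{Step 4: remove the gauge.} Since $c_1=c_2$ outside $\Omega$ and both are smooth, all their jets agree on $\partial\Omega$. Hence $\Phi$ extended by the identity outside $\Omega$ is a smooth isometry of $(\R^n,g_{c_1})\to(\R^n,g_{c_2})$. It satisfies $\Phi^*(c_2^{-2}e)=c_1^{-2}e$, so $\Phi$ is conformal for the Euclidean metric and equals the identity on the open set $\R^n\setminus\bar\Omega$.
\begin{itemize}
\item For $n\ge3$, Liouville's theorem says $\Phi$ is a Möbius transformation, and since it is the identity on an open set it is the identity.
\item For $n=2$, $\Phi$ is orientation preserving (it is the identity near $\partial\Omega$), hence holomorphic. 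It agrees with $z$ on an open set, so $\Phi(z)=z$ by the identity theorem.
\end{itemize}
Thus $\Phi=\mathrm{id}$, and the relation $c_1^{-2}e=\Phi^*(c_2^{-2}e)=c_2^{-2}e$ gives $c_1\equiv c_2$.
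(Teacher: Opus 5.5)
Your Steps 1 and 2 are sound, and Step 4 is essentially right. Step 3, however, is where the whole geometric inverse problem lives, and it is not carried out.

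Kurylev's boundary distance theorem, and its proof in Katchalov--Kurylev--Lassas, concerns the intrinsic distance of $(\bar\O,g)$. Your functions $r^i_y=d_{c_i}(y,\dummy)|_{\p\O}$ are built from the distance of $(\R^n,g_{c_i})$, whose minimizers may leave $\bar\O$. As you note yourself, the theorem does not apply as stated, and items (a)--(c) only list what would have to be proved. Injectivity does adapt: the distance from a point of $\O$ to $\p\O$ is the same ambiently and intrinsically, so the nearest-point argument goes through. But (b)--(c) are the substance, and three things are missing:
\begin{enumerate}[(i)]
\item for every $y\in\O$ you must find boundary points $p$ with $d_{c_i}(\dummy,p)$ smooth near $y$ and gradients filling an open set of directions;
\item the resulting charts must be singled out by the data alone, so that they are simultaneously charts for $g_{c_1}$ near $y$ and for $g_{c_2}$ near $\Phi(y)$; otherwise smoothness of $\Phi=R_2^{-1}\circ R_1$ does not follow;
\item points of the boundary cut locus and of $\p\O$ need separate treatment.
\end{enumerate}
None of this is addressed. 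The hypothesis $c_1=c_2$ outside $\O$, which you say you will use here, never actually enters.

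The missing idea is to use that hypothesis to \emph{enlarge} the data rather than cut it down to $\p\O$. Fix $z\in\R^n\setminus\O$ and $w\in\rec$, and take a $g_{c_1}$-minimizing geodesic from $z$ to $w$. If it avoids $\O$, its $c_1$- and $c_2$-lengths agree. Otherwise, let $x\in\p\O\subset\bar\rec$ be its first point in $\bar\O$, pick $y_1\in\O$ on it after $x$, and take the source $y_2$ with $d_{c_1}(y_1,\dummy)=d_{c_2}(y_2,\dummy)$ on $\bar\rec$ (by continuity). Then
\[ d_{c_1}(z,w)=d_{c_1}(z,x)+d_{c_1}(x,y_1)+d_{c_1}(y_1,w)\ge d_{c_2}(z,x)+d_{c_2}(x,y_2)+d_{c_2}(y_2,w)\ge d_{c_2}(z,w), \]
and the reverse inequality follows symmetrically. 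This gives $\TTD{c_1}{\R^n,\rec}(\R^n)=\TTD{c_2}{\R^n,\rec}(\R^n)$, i.e.\ travel-time data on an open set of a complete manifold. That is exactly the setting of the reconstruction results in \cite{saksala2025inverse}, which yield a diffeomorphism $\phi$ of $\R^n$ with $\phi^*g_{c_1}=g_{c_2}$ and $\phi=\mathrm{id}$ on $\rec$; conformality and real analyticity then force $\phi=\mathrm{id}$.

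This route also makes your extension-by-identity step unnecessary. That step is correct but too quick as written: ``the jets agree'' should be replaced by noting that $\Phi$ intertwines the two normal exponential maps, which coincide on the exterior side. Alternatively, you can bypass it with Lemma~\ref{lma:boundary-fixing-conformal}.
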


\begin{Corollary}
\label{cor:enclosure_sources_enclosure_measurement}
Let  $M>1$, $n\in \{2,3,\ldots\}$, $\O \subset \R^n$ be a smooth domain, $\epsilon>0$, and $\rec=\source=B_e(\O,\epsilon)\setminus \bar \O$. Suppose $c_1, c_2 \in C^\infty(\R^n)$ satisfy \eqref{eq:uniform_sound_speed} and $c_1|_{\R^n\setminus \Omega} = c_2|_{\R^n\setminus \Omega}$. 
    If $(\bar \O,g_{c_i})$ is simple for both $i \in\{1,2\}$, $T>M\diam_e(\rec)$, and     
    $D_{c_1}^{\source, \rec, T}=D_{c_2}^{\source, \rec,T}$ then $c_1\equiv c_2$.
\end{Corollary}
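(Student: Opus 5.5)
We prove Corollary~\ref{cor:enclosure_sources_enclosure_measurement} in two steps: Theorem~\ref{thm:reduction} turns the wave data into travel-time data, and we then use that data to recover the boundary distance function of $(\bar\O,g_{c_i})$, which determines a simple metric. Note that $\source\cup\rec=\rec$, so the hypothesis $T>M\diam_e(\rec)$ is exactly the hypothesis of Theorem~\ref{thm:reduction}. Hence we obtain $\TTD{c_1}{\rec,\rec}(\rec)=\TTD{c_2}{\rec,\rec}(\rec)$: for every $y_1\in\rec$ there is $y_2\in\rec$ with $d_{c_1}(y_1,\cdot)|_\rec=d_{c_2}(y_2,\cdot)|_\rec$.

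\textbf{Step 1: the labeling is trivial.} Near $y_2\in\rec$ the metrics agree, because $c_1=c_2$ outside $\O$ and $\rec$ is open and disjoint from $\bar\O$. For $z$ in a small ball $B$ around $y_2$ inside $\rec$, the distance $d_{c_2}(y_2,z)$ is realized by a short geodesic inside $B$, since $c$ is bounded so short curves stay local. Hence $d_{c_1}(y_2,z)=d_{c_2}(y_2,z)$ for $z$ near $y_2$. The function $d_{c_1}(y_1,\cdot)=d_{c_2}(y_2,\cdot)$ vanishes at $y_2$, so $y_1=y_2$. Consequently $d_{c_1}(y,z)=d_{c_2}(y,z)$ for all $y,z\in\rec$, and by continuity also for $y,z\in\p\O$, since $\p\O\subset\bar\rec$.

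\textbf{Step 2: from $d_{c_i}$ to $d_{(\bar\O,g_{c_i})}$ on $\p\O$.} This is the main obstacle. The distance $d_{c_i}(y,z)$ between boundary points is an infimum over curves in all of $\R^n$, while rigidity needs the intrinsic boundary distance of $\bar\O$. We treat the two ranges of $d_{c_i}(y,z)$ separately.
\begin{itemize}
\item \emph{Upper bound.} Trivially $d_{c_i}(y,z)\le d_{\bar\O,i}(y,z)$, where $d_{\bar\O,i}$ denotes the distance in $(\bar\O,g_{c_i})$. We need more than the equality of the full-space distances: we show that the full-space distance data determine $d_{\bar\O,i}$ on $\p\O\times\p\O$, with a formula that is the same for $i=1,2$.
\item \emph{Decomposition of minimizers.} A $d_{c_i}$-minimizing curve between $y,z\in\p\O$ alternates between arcs in $\R^n\setminus\O$, where $c_1=c_2$, and arcs in $\bar\O$. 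The latter are $\bar\O$-minimizing between their own boundary endpoints, and by simplicity (strict convexity) they are interior geodesics.
\item \emph{Recovering $d_{\bar\O,i}$.} Strict convexity gives local injectivity near the diagonal: for $y,z\in\p\O$ close to each other, $d_{\bar\O,i}(y,z)$ equals the length of the unique interior geodesic. An alternative route, which I expect to be cleaner, is to compare the two metrics through lens data: Stefanov--Uhlmann style results use the agreement of $d_{c_1}$ and $d_{c_2}$ near $\p\O$ together with $c_1=c_2$ outside $\O$. A more robust option is to form the glued manifolds, or to pass directly to \emph{scattering relation} data on $\p\O$ from the equality $d_{c_1}=d_{c_2}$ on $\rec\times\rec$. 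First variation of $z\mapsto d_{c_i}(y,z)$ at points $z\in\rec$ whose minimizing geodesic crosses $\O$ gives the exit direction, and the two metrics agree outside $\O$, so the exit data coincide.
\end{itemize}
The plan I would follow first is the more direct one. Show that the $d_{c_i}$-minimizing geodesics from $y\in\rec$ hitting $\O$ cover the relevant interior geodesics of $\bar\O$. Then identify the sets where $d_{c_i}(y,\cdot)$ is smooth in $\rec$; these sets are determined by the data, so they agree for $i=1,2$. Reading off gradients there, which are computed with the common metric outside $\O$, yields coinciding lens data: entry point, exit point, directions and length.

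\textbf{Step 3: conclusion.} Simple manifolds are lens rigid, equivalently boundary rigid. Since the metrics are conformal to $e$ and agree on $\p\O$, there is no diffeomorphism gauge, by Mukhometov's theorem in the conformal class. Hence $c_1=c_2$ on $\O$, and with $c_1=c_2$ outside $\O$ we conclude $c_1\equiv c_2$.
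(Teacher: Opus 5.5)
Your Step~1 is the paper's argument. The detour through local agreement of the metrics is unnecessary, since $d_{c_1}(y_1,y_2)=d_{c_2}(y_2,y_2)=0$ already forces $y_1=y_2$. You are right that Step~1 only yields equality of the \emph{ambient} distances, $d_{c_1}=d_{c_2}$ on $\bar{\mathcal R}\times\bar{\mathcal R}\supset\partial\Omega\times\partial\Omega$. Rigidity in a conformal class (Mukhometov, Croke), by contrast, concerns the \emph{intrinsic} boundary distance $d^{\Omega}_{c_i}$ of $(\bar\Omega,g_{c_i})$. The paper's proof applies Croke's Theorem~C directly to $d_{c_i}|_{\partial\Omega\times\partial\Omega}$ without addressing this. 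Compare the proof of Corollary~\ref{cor:interior_sources_partial_measurement}, where $d^{\Omega}_{c_i}=d_{c_i}$ is actually verified using $c_i=1$ outside the convex set $\Omega$.

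The genuine gap is your Step~2. It lists strategies without carrying any of them out. Its central claim, that the $d_{c_i}$-minimizing geodesics from $\mathcal R$ entering $\Omega$ cover the interior geodesics needed to recover $d^{\Omega}_{c_i}$ or the lens data, is false in general. First-variation arguments only give you lens data for geodesics of $(\bar\Omega,g_{c_i})$ that are \emph{globally} minimizing in $(\mathbb{R}^n,g_{c_i})$, and nothing in the hypotheses makes that set large. (Minor: lens rigidity of general simple manifolds is open for $n\ge 3$; only the conformal-class result you use in Step~3 is needed, and that one is available.)

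In fact no argument using only the output of Step~1 can succeed. Take $\Omega=B(0,1)$, $\eta=1/10$, $K\ge 10(\epsilon+\pi)$, $M>K$. Let $c_1$ be smooth, radial and nondecreasing, with $c_1=1$ on $\{|x|\le 1+\eta/2\}$ and $c_1=K$ on $\{|x|\ge 1+\eta\}$.
\begin{itemize}
\item Any two points of $\bar{\mathcal R}$ are joined (radially to the sphere $\{|x|=1+\eta\}$, along it, then radially) by a curve of $g_{c_1}$-length at most $2\max(\eta,\epsilon/K)+\pi(1+\eta)/K<1$.
\item Any curve joining two points of $\bar{\mathcal R}$ that meets $\bar B(0,1/2)$ crosses the annulus $\{1/2\le|x|\le 1\}$, where $g_{c_1}=e$, twice. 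Its length is therefore at least $1$.
\end{itemize}
Let $c_2=c_1-\phi$ with $0\le\phi\in C_c^\infty(B(0,1/2))$, $\phi\not\equiv 0$, small enough in $C^2$ that $(\bar\Omega,g_{c_2})$ is still simple. Then $g_{c_2}\ge g_{c_1}$ gives $d_{c_2}\ge d_{c_1}$. Moreover, the $c_1$-minimizers between points of $\bar{\mathcal R}$ avoid $\operatorname{supp}\phi$, so $d_{c_2}\le d_{c_1}$ there. Hence $d_{c_1}=d_{c_2}$ on $\bar{\mathcal R}\times\bar{\mathcal R}$, even with labels, and all hypotheses on $c_1,c_2$ hold, yet $c_1\ne c_2$.

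So the travel-time data produced by Theorem~\ref{thm:reduction} cannot determine $c$ in this setting. A correct proof needs one of two things:
\begin{itemize}
\item an additional hypothesis guaranteeing $d_{c_i}=d^{\Omega}_{c_i}$ on $\partial\Omega\times\partial\Omega$, for example that ambient minimizers between boundary points stay in $\bar\Omega$, as arranged in Corollary~\ref{cor:interior_sources_partial_measurement}; or
\item information in the wave data beyond first arrival times.
\end{itemize}
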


\begin{Corollary}
\label{cor:interior_sources_partial_measurement}
    Let $M>1$, $n\in \{2,3,\ldots\}$ and $\O \subset \R^n$ be a smooth bounded strictly convex domain. Let $\source=\O$ and $\rec$ be an open set such that $\bar \rec \cap \p \O$ has an interior point in $\p \O$. 
    Suppose $c_1, c_2 \in C^\infty(\R^n)$ satisfy \eqref{eq:uniform_sound_speed} and $c_1|_{\R^n\setminus \Omega} = c_2|_{\R^n\setminus \Omega}=1$. 
    If  $T>M\diam_e(\source \cup \rec)$, and     
    $D_{c_1}^{\source, \rec, T}=D_{c_2}^{\source, \rec,T}$ then $c_1\equiv c_2$.
\end{Corollary}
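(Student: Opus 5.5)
The plan is to first reduce to a geometric problem using Theorem~\ref{thm:reduction}, then reconstruct the conformal Riemannian structure up to an isometry, and finally rule out nontrivial isometries using conformal flatness. Since $T>M\diam_e(\source\cup\rec)$, Theorem~\ref{thm:reduction} shows that $D_{c_1}^{\source,\rec,T}=D_{c_2}^{\source,\rec,T}$ implies $\TTD{c_1}{\source,\rec}(\source)=\TTD{c_2}{\source,\rec}(\source)$. Concretely, for every $y_1\in\O$ there is $y_2\in\O$ with $d_{c_1}(y_1,\dummy)=d_{c_2}(y_2,\dummy)$ on $\rec$, and vice versa.

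Let $\Gamma\subset\p\O$ be a nonempty relatively open set contained in $\bar\rec\cap\p\O$. Distance functions are Lipschitz, so the identities extend by continuity to $\bar\rec$, and in particular to $\Gamma$. This yields equality of the unlabeled families
\[
\{d_{c_1}(y,\dummy)|_{\Gamma} : y\in\O\}=\{d_{c_2}(y,\dummy)|_{\Gamma} : y\in\O\}.
\]
These are partial travel-time data for interior sources measured on an open piece of the boundary. Here the ambient metrics $g_{c_i}$ are complete on $\R^n$ and equal to $e$ outside $\O$. Because $\O$ is strictly convex and $c_i=1$ near $\Gamma$ on the exterior side, I expect the smoothness of $d_{c_i}(y,\dummy)$ near $\Gamma$ (away from the exterior cut locus) to recover the outward directions of minimizing geodesics from $\Gamma$.

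Next I would invoke the known uniqueness theorem for partial travel-time data, in the form of Lassas--Saksala / Katchalov--Kurylev--Lassas. It says that the map $y\mapsto d_{c_i}(y,\dummy)|_\Gamma$ is an embedding of $\O$, and that its image together with the known geometry near $\Gamma$ determines $(\O,g_{c_i})$ up to isometry. The outcome is a diffeomorphism $\Psi\colon\O\to\O$ with $\Psi^*g_{c_2}=g_{c_1}$ and $\Psi=\id$ near $\Gamma$. Checking the hypotheses of that theorem in the present setting is the main obstacle. The delicate points are that minimizing curves between interior points may exit $\O$, and that the data are unlabeled. I would handle the first by working directly with the complete manifold $(\R^n,g_{c_i})$ and the open observation set outside $\O$ near $\Gamma$. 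Since $c_1=c_2=1$ there, the metrics coincide on that set, so the glued map
\[
\Phi=\begin{cases}\Psi & \text{on } \O,\\ \id & \text{on } \R^n\setminus\O\end{cases}
\]
becomes a global isometry $(\R^n,g_{c_1})\to(\R^n,g_{c_2})$ that equals the identity on an open set.

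Finally, $\Phi^*(c_2^{-2}e)=c_1^{-2}e$ means $\Phi$ is a smooth conformal map of Euclidean space. For $n\ge3$, Liouville's theorem makes $\Phi$ a Möbius transformation, and a Möbius transformation that is the identity on an open set is the identity. For $n=2$, $\Phi$ is holomorphic or antiholomorphic on the connected set $\R^2$ and agrees with $z\mapsto z$ on an open set, so $\Phi=\id$ by the identity theorem. Hence $c_1^{-2}e=c_2^{-2}e$ everywhere, and therefore $c_1\equiv c_2$.
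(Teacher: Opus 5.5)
\textbf{There is a genuine gap at the core of the argument.} Your reduction via Theorem~\ref{thm:reduction} and the passage from $\rec$ to $\Gamma$ by continuity are fine. However, the central geometric step is an appeal to an unspecified theorem whose hypotheses you explicitly leave unchecked, and the idea needed to check them is missing.

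Your data are \emph{ambient} distances $d_{c_i}(y,z)$, $y\in\O$, $z\in\Gamma$, on $(\R^n,g_{c_i})$. The relevant uniqueness result for partial travel-time data (Pavlechko--Saksala \cite{Pavlechko2022}) is instead about the \emph{intrinsic} distance $d^\O_{c_i}$ of the compact manifold $(\bar\O,g_{c_i})$ with strictly convex boundary. The paper bridges this in two steps:
\begin{enumerate}
\item Since $c_i-1$ vanishes to infinite order on $\p\O$, the boundary is strictly convex for $g_{c_i}$ as well.
\item Suppose a $g_{c_i}$-minimizing geodesic from $y\in\bar\O$ to $z\in\Gamma$ ever left $\bar\O$. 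It would then continue as a Euclidean straight line in $\R^n\setminus\O$, which by strict convexity never returns to $\bar\O$, so it could not end at $z\in\p\O$.
\end{enumerate}
Hence $d_{c_i}(y,z)=d^\O_{c_i}(y,z)$, and the travel-time data become the unlabeled partial data $\{d^\O_{c_i}(y,\dummy)|_\Gamma : y\in\bar\O\}$, to which \cite{Pavlechko2022} applies.

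Your suggested workaround, using the complete manifold together with an ``open observation set outside $\O$ near $\Gamma$'', is not available in general. The set $\rec$ is only required to have $\Gamma$ in its closure and may lie entirely inside $\O$ (e.g.\ $\rec=\O$). The geometry inside $\O$ near $\Gamma$ is not known a priori. And none of the results you cite treats sources confined to $\O$ with partial receivers.

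\textbf{The last step also does not go through as written.} The geometric theorem yields a diffeomorphism $\phi$ of $\bar\O$ with $\phi^*g_{c_2}=g_{c_1}$ and $\phi=\id$ only \emph{on} $\Gamma\subset\p\O$. It does not give $\phi=\id$ on an open subset of $\O$: the two metrics are not known to agree inside $\O$, so their boundary normal coordinates need not coincide. Consequently your glued map $\Phi$ is unjustified. It need not even be continuous across $\p\O\setminus\Gamma$, and near $\Gamma$ its conformality across $\p\O$ is exactly what has to be proved.

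The paper handles this with Lemma~\ref{lma:boundary-fixing-conformal}: a continuous self-map of $\bar\O$ that is conformal in $\O$ and fixes a relatively open boundary patch pointwise is the identity. The proof uses Rad\'o's theorem for $n=2$ and a direct analysis of M\"obius maps for $n\ge 3$. Your Liouville/identity-theorem argument only applies once the map is known to be the identity on an open set, which is precisely what that lemma supplies.
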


\subsection{Related work}

Most previous work on inverse problems for the wave equation has used boundary data. The boundary control method, originally developed by Belishev \cite{Belishev1987} for the acoustic wave equation and later expanded to Riemannian manifolds by Belishev and Kurylev \cite{BelishevKurylev1992}, reconstructs a Riemannian metric $g$ on a smooth bounded domain $\Omega$ from the hyperbolic Dirichlet-to-Neumann map of the operator $\partial_t^2-\Delta_g$, up to a boundary-fixing isometry. We refer to \cite{IBSPbook} for related results and more references. Microlocal methods have also been used to extract geometric and coefficient information from the singularities measured in waves \cite{GreenleafUhlmann1993, OksanenSaloStefanovUhlmann2024}. Recently, Saksala and Shedlock \cite{saksala2025inverse} recovered a complete Riemannian metric and lower-order coefficients, up to natural gauges, from a local source-to-solution map, and so extended the result of \cite{HelinLassasOksanenSaksala2018} that proved almost-sure recovery, up to isometry, from long-time empirical correlations of waves generated by an unknown spacetime white-noise source, assuming the metric is non-trapping and is Euclidean outside a compact set.

In contrast to the work mentioned above, in our setting we have no control over sources, and our data consist of an unlabeled collection of waves generated from interior point sources. Compared with Dirichlet-to-Neumann or source-to-solution data, our data is weaker in the sense that we cannot control the sources to produce special solutions of any kind. When the source points range over the whole unknown domain, as in Corollary \ref{cor:interior_sources_enclosure_measurement}, however, the resulting geometric data contain travel times from every interior point. 
It was first shown in \cite{IBSPbook} that the travel-time data determine a compact Riemannian manifold with boundary. The stability of this determination was later studied in, for instance, \cite{deHoopIlmavirtaLassasSaksala2023, ilmavirta2024lipschitzstabilitytraveltime, ilmavirta2023three, katsuda2007stability}.
In a similar measurement geometry, it was shown in \cite{ivanov2020distance1, deHoopSaksala2019, LassasSaksala2019} that an unlabeled collection of distance-difference functions associated with interior points determines the underlying Riemannian manifold up to isometry. In particular, our proof of Theorem \ref{thm:reduction} is closely related to the proof of \cite[Proposition 3.1]{LassasSaksala2019} that concerns wave equation with trivial initial conditions and spacetime interior point sources as forcing functions.  Related reconstruction problems involving travel-time and scattering data generated from unknown interior sources were studied in \cite{LassasSaksalaZhou2018}. We show that propagation of singularities and the sharp finite speed of propagation property reduce the unlabeled wave data to the corresponding unlabeled travel-time data, without requiring simplicity, convexity, or non-trapping assumptions. In the setting of Corollary \ref{cor:interior_sources_enclosure_measurement}, these travel-time data determine the wave speed without any such geometric assumptions.

Similar inverse problems have been studied in elastodynamics. For isotropic elastic tensors, Rachele \cite{Racheleboundary2000, Rachelespeeds2000, Racheledensity2003} proved uniqueness of boundary jets of the Lam{\'e} parameters and density, and showed that under additional assumptions on the pressure / shear wave speeds, the Lam\'e parameters as well as the density can be uniquely determined from the Dirichlet-to-Neumann map. She has also studied uniqueness for elastic media with residual stress \cite{Racheleresidualstress2003}. Hansen and Uhlmann \cite{HansenUhlmann2003} recovered the compressional and shear lens relations from boundary measurements for elastic media with bounded residual stress in a setting that allows conjugate points and caustics. Stefanov, Uhlmann and Vasy \cite{StefanovUhlmannVasy2018} proved local recovery of the compressional and shear speeds under a convex foliation assumption. More recently, Zhai \cite{JianZhai2026} recovered the Lam{\'e} parameters and density under such a convex foliation assumption, while in \cite{IlmavirtaSaksalaYan2026} it was shown that any Lam{\'e} system whose boundary measurements agree with those arising from a homogeneous Lam{\'e} system must itself be homogeneous. These results motivate the study of analogous problems in elastodynamics with interior sources. The scalar case considered here allows us to isolate the geometric reduction step from the complications that arise in elastic systems.

\subsection{Outline of the paper}

Section~\ref{sec:preliminaries} collects some geometric and microlocal preliminaries needed in the main argument, including well-posedness of the forward problem, sharp finite speed of wave propagation, propagation of singularities for real principal type operators, and a theorem that describes the behavior of wave front sets under pullbacks of distributions. In Section~\ref{sec:reduction}, we prove Theorem~\ref{thm:reduction} by identifying the first arrival time of the wave $u^{y,A}$ at a point with the Riemannian distance from the source $y$ to the arrival point. Section~\ref{sec:geometric-recovery} then proves Corollaries \ref{cor:interior_sources_enclosure_measurement}--\ref{cor:interior_sources_partial_measurement} by reducing the wave measurements to travel-time data and solving the corresponding geometric inverse problems. Appendix~\ref{app:postponed-proofs} collects the proofs of some secondary lemmas that were omitted from the main text. In Appendix~\ref{app:FIO-proof}, we provide an alternative proof of Proposition~\ref{prop:llc}, which characterizes the wave front set of $u^{y,A}$, using a local Fourier integral parametrix. Although more involved in the scalar setting, this proof provides a natural template for generalizing the argument to elastic systems, where the characteristic set splits into multiple sheets and the simpler argument using pullbacks of wave front sets does not directly apply.

\subsection{Acknowledgments}

JI was supported by the Research Council of Finland (Flagship of Advanced Mathematics for Sensing Imaging and Modelling grant 359208; Centre of Excellence of Inverse Modelling and Imaging grant 353092; and other grants 351665, 358047, 360434) and a Väisälä project grant by the Finnish Academy of Science and Letters.
TS was supported by the National Science Foundation (DMS-2510272) and the  Simons Foundation Travel Support for Mathematicians (MPS-TSM-00013291).
AT was supported by the Research Council of Finland (Flagship of Advanced Mathematics for Sensing Imaging and Modelling grant 359208; Centre of Excellence of Inverse Modelling and Imaging grant 353092; and other grants 358047, 360434).
GU was supported by the National Science Foundation.

We are grateful to Katya Krupchyk for helpful discussions and suggestions, and for generously hosting JI, TS, and GU at the University of California, Irvine, where a portion of this work was completed.

\section{Preliminaries} \label{sec:preliminaries}

In this section, we collect some geometric, analytic, and microlocal facts used in the reduction from wave measurements to travel-time data. 

\subsection{The Lorentzian formulation of the forward problem}
We begin by considering the Cauchy problem \begin{equation}\label{general-ivp}
    \begin{cases}
        (\partial_t^2-c(x)^{\alpha}\divg_x(c(x)^{2-\alpha}\nabla_x))u(t,x) = 0, & (t,x) \in \R\times \R^n, \\
        u(0,x) = f_0(x), & x \in \R^n \\
        \partial_t u(0,x) = f_1(x), & x \in \R^n,
    \end{cases}
\end{equation} 
where $f_0,f_1 \in \mathcal{D}'(\R^n)$. Denote by $P_{c,\alpha}$ the differential operator
\begin{equation}\label{eq:operator}
    P_{c,\alpha}u(t,x) = \partial_t^2u(t,x)-c(x)^{\alpha}\divg_x(c(x)^{2-\alpha}\nabla_x u(t,x)), \qquad (t,x) \in \R\times\R^n.
\end{equation}
Expanding the spatial part, we get
$$P_{c,\alpha}u = \left(\partial_t^2u -c^2\Delta_xu\right) - (2-\alpha)c\nabla_xc \cdot \nabla_x u.$$
So the principal part of $P_{c,\alpha}$ is independent of $\alpha$, and its principal symbol is the following homogeneous polynomial of degree two:  
\begin{equation} \label{eq:principalsymb} p(t,x,\tau,\xi) \,
= \, -\tau^2+c(x)^2|\xi|^2 \,
=\, -(\tau +c(x)|\xi|)(\tau - c(x)|\xi|).
\end{equation}

We equip the space $\R_t\times \R^n_x$ with the Lorentzian metric 
\begin{equation}\label{eq:lorentzmetric}
   \bar{g} = -dt^2 + g_c = -dt^2+c^{-2}e .
\end{equation}

Let us recall some standard terminology related to Lorentzian manifolds; see \cite[Sec.~1.3]{baer2007wave} or \cite[Ch.~14]{ONeill} for a more detailed treatment. A nonzero tangent vector $v$ in a Lorentzian manifold $(\X, \widetilde{g})$ is said to be \emph{timelike}, \emph{lightlike} (or \emph{null}), or \emph{spacelike}, if 
$$\widetilde{g}(v,v) < 0, \quad  \widetilde{g}(v,v)=0, \quad \text{or} \quad \widetilde{g}(v,v)>0,$$ 
respectively. A vector $v$ is said to be \emph{causal} if it is either timelike or null, i.e., $\widetilde{g}(v,v)\leq 0$. A \emph{time orientation} on $\X$ is a smooth timelike vector field $Z$. Given such a $Z$, a causal vector $v\in T_p\X$ is called \emph{future-directed} if
$\widetilde g(v,Z_p)<0$, and past-directed if
$\widetilde g(v,Z_p)>0$. We use the time orientation on
$(\R_t\times \R_x^n, \bar{g})$ determined by $Z=\partial_t$.

A piecewise $C^1$ curve $\bar \gamma$ in $\X$ is said to be timelike (resp., lightlike, spacelike, causal, future-directed, or past-directed) if its tangent vector is timelike (resp., lightlike, spacelike, causal, future-directed, past-directed) at each point of differentiability. The \emph{causal future} $J^+(p)$ (resp., the \emph{causal past} $J^-(p)$) of a point $p\in \X$ is the set of all points $q\in \X$ such that there exists a future-directed (resp., past-directed) causal curve from $p$ to $q$. The causal future and past of a subset $S \subset \X$ are defined by $J^{\pm}(S) := \cup_{p \in S}J^{\pm}(p)$. We also define the \emph{causal set} of $S$ by $J(S) := J^+(S)\cup J^-(S)$.

A hypersurface $\Sigma \subset \X$ is said to be \emph{Cauchy} if every inextendible future- or past-directed timelike curve in $\X$ meets $\Sigma$ exactly once. If $\X$ admits a Cauchy hypersurface, it is said to be \emph{globally hyperbolic} (see \cite[Theorem~1.3.10]{baer2007wave} for equivalent definitions). 
We note here the well-known fact  that our spacetime $(\R\times\R^n, \bar{g})$ is globally hyperbolic. 

\begin{Lemma}\label{lemma:globalhyp}
Let $\bar g$ be as \eqref{eq:lorentzmetric}.
    Every inextendible past- or future-directed causal curve in the Lorentzian manifold $(\R\times\R^n, \bar{g})$ meets each slice
    $$\Sigma_t = \{t\}\times \R^n \qquad (t \in \R)$$
    exactly once. In particular, $(\R\times \R^n, \bar{g})$ is a globally hyperbolic Lorentzian manifold, and each $\Sigma_t$ is a Cauchy hypersurface.
\end{Lemma}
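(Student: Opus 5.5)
The plan is to parametrize any causal curve by a parameter $s$ and look at its $t$-component. Let $\bar\gamma(s)=(t(s),x(s))$ be a future-directed causal curve, piecewise $C^1$, defined on a maximal interval $(a,b)$.

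\textbf{Strict monotonicity of $t$.} Causality at points of differentiability means
\[
-\dot t^2+c(x)^{-2}|\dot x|^2\le 0 .
\]
Future-directedness with respect to $Z=\partial_t$ means $\bar g(\dot{\bar\gamma},\partial_t)=-\dot t<0$, so $\dot t>0$. Hence $t(s)$ is strictly increasing and the curve meets each $\Sigma_t$ at most once. Past-directed curves reduce to this case by reversing the parameter.

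\textbf{Reparametrization by $t$.} Since $\dot t>0$, we may reparametrize $\bar\gamma$ by $t$ itself on an interval $(t_-,t_+)$, writing $\bar\gamma(t)=(t,x(t))$. Causality then gives
\[
|x'(t)|\le c(x(t))<M
\]
by \eqref{eq:uniform_sound_speed}. So $x$ is $M$-Lipschitz in the Euclidean metric.

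\textbf{Surjectivity of $t$, the main step.} Suppose $t_+<\infty$. The Lipschitz bound makes $x(t)$ Cauchy as $t\to t_+$, so $x(t)\to x_+$ for some $x_+\in\R^n$. Then $\bar\gamma$ converges to the point $(t_+,x_+)$ and can be extended, for instance by concatenating with the vertical timelike segment $s\mapsto(t_++s,x_+)$. This contradicts inextendibility, so $t_+=\infty$. The same argument gives $t_-=-\infty$. Therefore the curve meets every slice $\Sigma_t$, and by monotonicity it meets each one exactly once.

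The delicate point is that the notion of inextendibility must be taken in the standard sense, namely having no endpoint: a future endpoint of $\bar\gamma$ is a limit of $\bar\gamma(s)$ as $s\to b$. It is this limit that the uniform bound $c<M$ (completeness of $g_c$) guarantees.

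\textbf{Conclusion.} Timelike curves are causal, so every inextendible timelike curve meets each $\Sigma_t$ exactly once. Each $\Sigma_t$ is therefore a Cauchy hypersurface, and global hyperbolicity follows by definition. One can also cite \cite[Theorem~1.3.10]{baer2007wave}, or the standard fact that $-dt^2+g$ is globally hyperbolic when $g$ is complete.
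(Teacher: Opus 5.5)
Your proposal is correct and follows the paper's proof essentially step for step: strict monotonicity of $t$ from future-directedness, reparametrization by $t$, and ruling out a finite $t_+$ by showing $x(t)$ is Cauchy and extending by the vertical timelike segment $(t,x_+)$. The only difference is cosmetic. You use the Euclidean Lipschitz bound $|x'(t)|\le c(x(t))<M$ and completeness of Euclidean $\R^n$, whereas the paper uses $d_c(\tilde x(t_1),\tilde x(t_2))\le |t_2-t_1|$ and completeness of $g_c$, and these are interchangeable here.
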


We give a proof in Appendix~\ref{app:postponed-proofs} for the convenience of the reader.

\subsection{Well-posedness and sharp finite speed of propagation} 

A second-order differential operator on a Lorentzian manifold is said to be \emph{normally hyperbolic} 
if its principal symbol coincides with the quadratic form induced by the metric on cotangent vectors (ref.~\cite[Sec.~1.5]{baer2007wave}). For $(\R_t\times \R^n_x, \bar{g})$, the quadratic form is
\begin{equation*}
    \bar{g}^{-1}_{(t,x)}((\tau,\xi),(\tau,\xi)) = -\tau^2 + c(x)^2|\xi|_e^2 
  \qquad \text{for all } (t,x,\tau,\xi) \in T^*(\R\times \R^n),
\end{equation*}
which coincides with the principal symbol $p(t,x,\tau,\xi)$ of $P_{c,\alpha}$ from \eqref{eq:principalsymb}. This shows that $P_{c,\alpha}$ is indeed a normally hyperbolic operator. There is a standard Cauchy theory for such operators on globally hyperbolic Lorentzian manifolds; see, for instance \cite[Ch.~3]{baer2007wave} for the case of smooth Cauchy data. For distributional Cauchy data, we will use the formulation in \cite{ForwardProblemPaper}.

\begin{Lemma}[Well-posedness]\label{well-posedness} 
Let $c \in C^\infty(\R^n)$ satisfy \eqref{eq:uniform_sound_speed}. Given $f_0,f_1 \in \mathcal{D}'(\R^n)$, there exists a distributional solution $u(t,x)\in \mathcal{D}'(\R\times \R^n)$ of \eqref{general-ivp} with the following approximation property: If $f_{0,j}, f_{1,j} \in C_c^\infty(\R^n)$ are such that
$$f_{0,j} \longrightarrow f_0, \qquad f_{1,j} \longrightarrow f_1 \quad \text{in } \mathcal{D}'(\R^n)$$
as $j \to \infty$, and $u_j\in C^\infty(\R\times\R^n)$ is the classical solution of \eqref{general-ivp} corresponding to the Cauchy data $(f_{0,j}, f_{1,j})$, then
$$u_j \longrightarrow u \quad \text{in } \mathcal{D}'(\R\times \R^n).$$
The solution $u$ is unique among the distributional solutions satisfying this approximation property. Moreover, the map
$$(f_0, f_1) \mapsto u$$
is sequentially continuous from $\mathcal{D}'(\R^n)\times \mathcal{D}'(\R^n)$ to $\mathcal{D}'(\R\times\R^n)$.
\end{Lemma}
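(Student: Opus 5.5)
The plan is to obtain the distributional solution by duality from the smooth Cauchy theory on the globally hyperbolic spacetime $(\R\times\R^n,\bar g)$, using Lemma~\ref{lemma:globalhyp} together with the standard results of \cite[Ch.~3]{baer2007wave}. For smooth compactly supported data these give a unique smooth solution whose support lies in $J(\supp f_0\cup\supp f_1)$. They also give advanced and retarded Green operators $G^\pm$ for $P_{c,\alpha}$ and for its formal transpose $P^t$. The transpose is again normally hyperbolic, since it has the same principal symbol, so the same theory applies to it.

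\textbf{Green identity.} For a test function $\varphi\in C_c^\infty(\R\times\R^n)$, set $w=G^+_{P^t}\varphi-G^-_{P^t}\varphi$, which solves $P^tw=0$. Take smooth data $(f_{0,j},f_{1,j})$ with classical solution $u_j$. Integrating by parts over the slab between $\Sigma_0$ and a far-away Cauchy slice in each time direction gives
\begin{equation*}
\int u_j\,\varphi \,dt\,dx=\int_{\R^n}\bigl(f_{1,j}\,\rho\, w(0,\cdot)-f_{0,j}\,\rho\,\p_t w(0,\cdot)\bigr)\,dx .
\end{equation*}
Here $\rho$ is the smooth positive weight coming from the volume density. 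The $\alpha$-dependent first-order term only changes $P^t$ by lower-order terms and does not affect the structure. The right-hand side involves only the restrictions $w(0,\cdot)$ and $\p_tw(0,\cdot)$.

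\textbf{Compact support.} The key point is that $w(0,\cdot)$ and $\p_t w(0,\cdot)$ lie in $C_c^\infty(\R^n)$. This follows because $\supp w\subset J(\supp\varphi)$, and $J(\supp\varphi)\cap\Sigma_0$ is compact. The compactness holds since $c<M$ bounds the speed of causal curves, so this intersection lies in a Euclidean ball of radius $M\cdot\sup\{|t| : (t,x)\in\supp\varphi\}$ around the projection of $\supp\varphi$.

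\textbf{Definition of $u$ and its properties.} I then define
\[
\langle u,\varphi\rangle:=\langle f_1,\rho\,w(0,\cdot)\rangle-\langle f_0,\rho\,\p_tw(0,\cdot)\rangle .
\]
The map $\varphi\mapsto (w(0,\cdot),\p_tw(0,\cdot))$ is continuous from $C_c^\infty$ to $C_c^\infty$, with uniform support control on sets $\{\supp\varphi\subset K\}$. This continuity follows from continuity of the Green operators, so $u$ is a distribution. The same formula yields the remaining claims directly.
\begin{itemize}
\item $Pu=0$: replacing $\varphi$ by $P^t\psi$ gives $w=0$, because $G^\pm P^t\psi=\psi$.
\item Approximation property: if $f_{k,j}\to f_k$ in $\mathcal D'$, the right-hand side converges for each fixed $\varphi$, so $u_j\to u$. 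By the smooth Green identity, this $u$ is exactly the limit of the $u_j$, and it attains the initial data in the limiting sense.
\item Uniqueness within the class: any two such solutions are both $\mathcal D'$-limits of the same approximating sequence, so they coincide.
\item Sequential continuity of $(f_0,f_1)\mapsto u$: this is immediate from the formula, since convergence in $\mathcal D'(\R^n)$ means pointwise convergence on test functions.
\end{itemize}

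\textbf{Main obstacle.} I expect the hardest step to be justifying the Green identity rigorously. This means handling the boundary terms at the far slices, which vanish by the support properties of $u_j$ and $w$, and computing the correct weight $\rho$ and the exact form of $P^t$. The continuity of $\varphi\mapsto w|_{\Sigma_0}$ in the $C_c^\infty$ topology is also delicate. Alternatively, I would simply invoke the formulation in \cite{ForwardProblemPaper}, as the paper indicates, and note that the hypotheses there (normal hyperbolicity plus global hyperbolicity) are supplied by Lemma~\ref{lemma:globalhyp} and the symbol computation \eqref{eq:principalsymb}.
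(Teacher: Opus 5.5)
Your duality construction is correct, and it takes a genuinely different route from the paper's.

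\textbf{What the paper does.} The paper never constructs $u$. It verifies the hypotheses of \cite[Theorem~2.2]{ForwardProblemPaper}:
\begin{enumerate}
\item global hyperbolicity (Lemma~\ref{lemma:globalhyp});
\item normal hyperbolicity (from \eqref{eq:principalsymb});
\item formal self-adjointness of $P_{c,\alpha}$ with respect to the weighted density $c^{n-\alpha}\,dV_{\bar g}=c^{-\alpha}\,dt\,dx$.
\end{enumerate}
It then proves only uniqueness by hand, exactly as you do. Your fallback citation lists just the first two hypotheses. The self-adjointness check is the one place where $\alpha$ actually matters, so you would need it if you went that way.

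\textbf{What your route buys.} Your argument is self-contained given the Green operators of \cite{baer2007wave}. It needs no self-adjointness, only that $P^t$ is again normally hyperbolic. The approximation property and the sequential $\mathcal{D}'$-continuity then follow immediately from your formula.

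\textbf{A bookkeeping correction.} The Green identity comes from using the solution of $P^tv=\varphi$ supported in $J^-(\supp\varphi)$ on $\{t>0\}$, and the one supported in $J^+(\supp\varphi)$ on $\{t<0\}$. The two boundary terms at $\Sigma_0$ then combine into $w$. The function $w$ itself solves $P^tw=0$, so it cannot reproduce $\varphi$ on its own.

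\textbf{What the citation buys.} It gives a stronger continuity of the solution map and a precise sense in which the Cauchy data are attained. The paper relies on this later, as $\iota^*u^y=\delta_y$ in the proof of Proposition~\ref{prop:llc}. Your phrase ``attains the initial data in the limiting sense'' only restates the approximation property.

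To get genuine traces from your construction, you would add one step:
\begin{enumerate}
\item Repeat the duality with the adjoint Cauchy problem posed on $\Sigma_s$: $P^tW=0$, $W|_{\Sigma_s}=0$, $\partial_tW|_{\Sigma_s}=\psi$.
\item Use this to show that $t\mapsto u(t,\cdot)$ is continuous (indeed $C^1$) into $\mathcal{D}'(\R^n)$, with $u(0,\cdot)=f_0$ and $\partial_tu(0,\cdot)=f_1$.
\item Identify this slice with the pullback $\iota^*u$.
\end{enumerate}
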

\begin{proof}
    This will follow from \cite[Theorem~2.2]{ForwardProblemPaper} as soon as we verify its hypotheses. We saw in Lemma~\ref{lemma:globalhyp} that $(\R\times\R^n,\bar{g})$ is a globally hyperbolic Lorentzian manifold, and each slice $\Sigma_t =\{t\}\times \R^n$ is a Cauchy hypersurface. We have also verified that $P_{c,\alpha}$ is a normally hyperbolic operator on $(\R\times \R^n, \bar g)$. Next, we verify that $P_{c,\alpha}$ is \emph{formally self-adjoint} in the sense of satisfying \cite[Eq.~(2.2)]{ForwardProblemPaper}. Indeed, integration by parts with the weight $h_E = c(x)^{n-\alpha}$ will show that
    \begin{equation*}
     \int_{\R\times \R^n} h_E \psi P_{c,\alpha} \varphi\, dV_{\bar{g}} = \int_{\R\times \R^n}h_E\varphi P_{c,\alpha} \psi\, dV_{\bar{g}} \qquad \text{for all }\varphi,\psi \in C_c^\infty(\R\times \R^n).
    \end{equation*}
    So we may apply \cite[Theorem~2.2]{ForwardProblemPaper} to the operator $P_{c,\alpha}$ on the Lorentzian manifold $(\R\times \R^n, \bar{g})$, the Cauchy hypersurface $\Sigma_0$, and the Cauchy data $u_0,u_1 \in \mathcal{D}'(\Sigma_0)$ given by
    $$u_0(0,x) = f_0(x), \qquad u_1(0,x) = f_1(x), \qquad (x\in \R^n),$$
    to obtain a distributional solution $u \in\mathcal{D}'(\R\times \R^n)$ to \eqref{general-ivp} that satisfies the stated approximation property. The solution map $(u_0,u_1)\mapsto u$ in that theorem satisfies a stronger sequential continuity property than ordinary distributional convergence, so it immediately implies the sequential continuity of $(f_0, f_1)\mapsto u$ from $\mathcal{D}'(\R^n)\times \mathcal{D}'(\R^n)$ to $\mathcal{D}'(\R\times \R^n)$.

    To prove uniqueness, suppose $v$ is another distributional solution of \eqref{general-ivp} with the stated approximation property. Choose $f_{0,j},f_{1,j} \in C_c^\infty(\R^n)$ that converge to $f_0, f_1$ in $\mathcal{D}'(\R^n)$, and let $u_j\in C^\infty(\R\times \R^n)$ be the corresponding classical solutions. Then the approximation property implies
    $$u_j \longrightarrow u, \qquad \text{and} \qquad u_j \longrightarrow v \qquad \text{in }\mathcal{D}'(\R\times \R^n).$$
    Since distributional limits are unique, we conclude that $u=v$.
\end{proof}

\begin{remark}
    Henceforth, whenever we refer to the solution of the Cauchy problem \eqref{general-ivp} or its special cases, we will mean the unique distributional solution satisfying the approximation property stated in Lemma~\ref{well-posedness}.
\end{remark}

We next recall the sharp finite speed of propagation property. Geometrically,  this states that if the Cauchy data $f_0, f_1$ in \eqref{general-ivp} are supported in a compact set $K\subset \R^n$, the corresponding solution $u$ is supported in the causal set $J(\{0\}\times K)$. For smooth Cauchy data, this is a standard result for normally hyperbolic operators on globally hyperbolic manifolds; see, for example \cite[Theorem~3.2.11]{baer2007wave}. In the following lemma, we record the corresponding statement for distributional Cauchy data, in the form needed later. 

\begin{Lemma}[Sharp finite speed of wave propagation]\label{sharp}
    Let $f_0, f_1 \in \mathcal{E}'(\R^n)$ and $u$ be the solution of \eqref{general-ivp}. Then 
    \begin{equation}
         \supp(u) \subset \left\{ (t,x) \in \R\times \R^n \ : \ d_c(x, \supp(f_0)\cup \supp(f_1)) \leq |t|\right\}
    \end{equation}
    In particular, for the point source solution $u^{y,A}$ of \eqref{ivp}, we have
    \begin{equation}
        u^{y,A} = 0 \qquad \text{near } (t,x) \text{ whenever } |t| < d_c(x,y).
    \end{equation}
\end{Lemma}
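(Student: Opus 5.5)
The plan is to reduce to the smooth case via the approximation property of Lemma~\ref{well-posedness}, and to use the smooth finite speed of propagation result \cite[Theorem~3.2.11]{baer2007wave} for normally hyperbolic operators on globally hyperbolic manifolds. Set $K=\supp(f_0)\cup\supp(f_1)$, which is compact.

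\textbf{Step 1: identify the causal set.} We first show that the causal set $J(\{0\}\times K)$ in $(\R\times\R^n,\bar g)$ equals
\begin{equation*}
S_K:=\{(t,x) : d_c(x,K)\le |t|\}.
\end{equation*}
For the inclusion $J(\{0\}\times K)\subset S_K$, take a future-directed causal curve $s\mapsto (t(s),x(s))$ starting at $(0,k)$. Causality gives $\dot t\ge c^{-1}|\dot x|_e = |\dot x|_{g_c}$, so the $g_c$-length of $x(\cdot)$ is at most the elapsed time. Hence $d_c(x,k)\le t$. The past-directed case is symmetric.

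For the reverse inclusion, let $(t,x)\in S_K$ and suppose $t\ge0$. Choose $k\in K$ minimizing $d_c(\cdot\,,x)$; it exists since $K$ is compact. By completeness of $g_c$ there is a unit-speed minimizing geodesic from $k$ to $x$ of length $d\le t$. Lifting it as a null curve and then waiting at $x$ along $\partial_t$ produces a causal curve from $(0,k)$ to $(t,x)$. In particular $S_K$ is closed.

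\textbf{Step 2: approximate the data.} Take $f_{i,j}=\chi_\eps*f_i$, mollified with $\eps=1/j$. These satisfy $f_{i,j}\to f_i$ in $\mathcal D'$, and their supports lie in the Euclidean $1/j$-neighborhood $K_j$ of $K$.

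Smooth finite speed of propagation gives $\supp u_j\subset J(\{0\}\times K_j)=S_{K_j}$. Since $d_c(x,K_j)\ge d_c(x,K)-M/j$ (Euclidean and $g_c$ distances are comparable via \eqref{eq:uniform_sound_speed}), we get
\begin{equation*}
S_{K_j}\subset\{(t,x): d_c(x,K)\le |t|+M/j\}.
\end{equation*}

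\textbf{Step 3: pass to the limit.} Let $\varphi\in C_c^\infty$ have support in the open complement of $S_K$. By compactness of $\supp\varphi$ and continuity of $d_c(\cdot,K)-|t|$, there is $\delta>0$ with $d_c(x,K)-|t|>\delta$ on $\supp\varphi$. Hence $\langle u_j,\varphi\rangle=0$ as soon as $M/j<\delta$. The approximation property of Lemma~\ref{well-posedness} then gives $\langle u,\varphi\rangle=\lim_j\langle u_j,\varphi\rangle=0$, so $\supp u\subset S_K$.

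\textbf{Step 4: the point source case.} Apply this with $f_0=A\delta_y$ and $f_1=0$. If $|t|<d_c(x,y)$, the same strict inequality holds on a neighborhood of $(t,x)$ by continuity. That neighborhood therefore lies in the complement of $S_{\{y\}}$, so $u^{y,A}=0$ there.

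The main point needing care is Step 1. The causal-curve inequality and the construction of the lifted null curve are the geometric content; mixed-orientation curves do not arise, because $J$ is defined via future- or past-directed curves separately. The limit argument itself is routine once we have the margin $\delta$ and the uniform comparability of the metrics.
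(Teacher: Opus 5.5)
Your proof is correct and follows the paper's argument essentially step by step. You approximate the data by smooth compactly supported functions supported in shrinking neighborhoods of $K$, apply smooth finite speed of propagation, and bound the causal set via the length estimate for causal curves; then a uniform margin on $\supp\varphi$ lets you pass to the limit through the approximation property of Lemma~\ref{well-posedness}. The only differences are cosmetic: you use Euclidean $1/j$-neighborhoods with the correction $M/j$ where the paper uses $g_c$-neighborhoods, and the reverse inclusion $S_K\subset J(\{0\}\times K)$ in your Step~1 is true but not needed.
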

\begin{proof}
Let $K=\supp(f_0)\cup\supp(f_1)$. Since $f_0$ and $f_1$ are compactly supported, we may choose $f_{0,j},f_{1,j}\in C_c^\infty(\mathbb R^n)$, $(j \in \N)$, such that $f_{0,j}\to f_0$, $f_{1,j}\to f_1$ in $\mathcal{D}'(\R^n)$, and such that 
$$\supp(f_{0,j}) \cup \supp(f_{1,j}) \subset U_j, \qquad \text{where } U_j = \{x \in \R^n \, : d_c(x, K) < 1/j \}.$$
Let \(u_j\) be the solutions of \eqref{general-ivp} corresponding to Cauchy data $f_{0,j}, f_{1,j}$. By the sharp finite speed of propagation property for smooth Cauchy data (see, for instance, \cite[Theorem~3.2.11]{baer2007wave}), we have
\begin{equation}\label{eq:suppujinJUj}
    \supp(u_j) \subset J(\{0\}\times U_j).
\end{equation}
Suppose $(t,x) \in J(\{0\}\times U_j)$. Then there exists $x_0 \in U_j$ and a piecewise $C^1$ causal curve from $(0,x_0)$ to $(t,x)$. After reparametrizing, we may assume that the curve is of the form 
$$
\bar \gamma(s) = \left(\sgn(t)s, x(s)\right), \qquad 0\leq s \leq |t|.
$$
The causality condition implies that $|\dot{x}(s)|_{g_c} \leq 1$ at every point of differentiability, and therefore,
$$d_c(x,U_j) \leq d_c(x_0,x) \leq \int_0^{|t|} |\dot{x}(s)|_{g_c}\, ds \leq |t|.$$
Combining this with \eqref{eq:suppujinJUj}, we get
\begin{equation*}
    \supp(u_j) \subset J(\{0\}\times U_j) \subset \{(t,x) \in \R \times \R^n: d_c(x,U_j) \leq |t|\}.
\end{equation*}
Now by the (sequential) continuity of the solution of the distributional Cauchy problem (with respect to Cauchy data) from Lemma~\ref{well-posedness}, we have $u_j\to u$ in $\mathcal D'(\mathbb R\times\mathbb R^n)$. Choose a test function $\varphi \in C_c^\infty(\R\times \R^n)$ which is supported in $\{(t,x)\in \R\times \R^n \, : \, d_c(x,K) > |t|\}$. By compactness of  $\supp(\varphi)$, we must have
$$\supp(\varphi) \subset \{(t,x) \in \R\times \R^n \, : \, d_c(x,K) > |t|+1/j\} \subset \{(t,x)\in \R\times \R^n \, : \, d_c(x,U_j) > |t|\}$$
for all $j$ sufficiently large. Consequently,
$$\langle u, \varphi\rangle = \lim_{j\to \infty}\langle u_j, \varphi\rangle  = 0.$$
This proves that $u$ must be supported in $\{(t,x)\in \R\times \R^n \, : \, d_c(x,K) \leq |t|\}$ as claimed.
\end{proof}

\subsection{Wave front sets and propagation of singularities} \label{subsec:propagation-preliminaries}

We next recall the microlocal analysis results used to detect the first arrival of the wave $u^{y,A}$ initiated by a point source at $y$. Let $X\subset \R^m$ be open and let $Q$ be a classical pseudodifferential operator on $X$ with principal symbol $q$. The Hamiltonian vector field of $q$ on $T^*X$ is 
\begin{equation}\label{eq:Hamiltonian-vector-field}
    H_q := \sum_{j=1}^m \left(\frac{\partial q}{\partial \xi_j}\frac{\partial}{\partial x_j} - \frac{\partial q}{\partial x_j}\frac{\partial}{\partial \xi_j}\right).
\end{equation}
The integral curves of $H_q$ are called the bicharacteristics of $q$. If the curve is contained in $\{q=0\}$, we call it a null bicharacteristic.

\begin{Definition}
    We say that $Q$ is of real principal type if its principal symbol $q(x,\xi)$ is real-valued, and the differential $d_{x,\xi}q(x,\xi)$ is not a scalar multiple of $\sum_j \xi_j dx_j$ on $T^*X \setminus 0$.
\end{Definition}

In fact, it follows from the homogeneity of $q$ that $d_\xi q(x,\xi) \neq 0$ when $q(x,\xi)\neq 0$. Hence, $dq(x,\xi)$ can only be a scalar multiple of $\sum_j \xi_j dx_j$ on the characteristic set 
    \begin{equation}
    \Char(Q) = \{(x,\xi)\in T^*X \setminus 0 \ | \ q(x,\xi)=0\}.
    \end{equation}

For the hyperbolic operator $P_{c,\alpha}$ from \eqref{eq:operator}, the principal symbol $p(t,x,\tau,\xi)=-\tau^2+c(x)^2|\xi|^2$ is clearly real-valued. Moreover, on the characteristic set $\Char(P_{c,\alpha})$, we have seen that $\tau = \pm c(x)|\xi| \neq 0$. 
Hence
\begin{equation*}\partial_\tau p = -2\tau \neq 0\end{equation*}
on $\Char(P_{c,\alpha})$, which implies that $P_{c,\alpha}$ is of real principal type. 

\begin{Lemma}[Propagation of Singularities (ref.~{\cite[Theorem 8.1]{Gr-Sj}})]\label{propa}
  Let $Q$ be a properly supported pseudodifferential operator of real principal type on an open set $X\subset \R^n$, with principal symbol $q$. Suppose that $u \in \mathcal{D}'(X)$ satisfies $Qu=0$. Then
  $$WF(u) \subset \Char(Q),$$
  and each maximally extended null bicharacteristic curve of $q$ is either completely contained in $WF(u)$ or disjoint from it. In other words,  
  $WF(u)$ is a union of maximally extended null bicharacteristics of $q$.
  
\end{Lemma}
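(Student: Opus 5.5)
The statement is a standard microlocal fact. I would prove it in three steps: microlocal elliptic regularity off $\Char(Q)$, a local invariance statement along null bicharacteristics obtained from a normal form, and a connectedness argument to pass from local to global.

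\textbf{Step 1: $WF(u)\subset\Char(Q)$.} Let $(x_0,\xi_0)\in T^*X\setminus 0$ with $q(x_0,\xi_0)\neq 0$. By homogeneity, $q$ is elliptic in a conic neighbourhood $\Gamma$ of $(x_0,\xi_0)$. The symbol calculus then gives a properly supported microlocal parametrix $E$ with $EQ=\id+R$, where $R$ is smoothing microlocally in $\Gamma$. Hence $u=EQu-Ru=-Ru$ is microlocally smooth in $\Gamma$, so $(x_0,\xi_0)\notin WF(u)$.

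\textbf{Step 2: local propagation near a characteristic point.} Fix $(x_0,\xi_0)\in\Char(Q)$. The real principal type condition says $dq$ is not proportional to $\sum_j\xi_j\,dx_j$ there. Equivalently, $H_q$ and the radial vector field $\xi\cdot\partial_\xi$ are linearly independent at $(x_0,\xi_0)$.

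\begin{itemize}
\item[(a)] After multiplying $Q$ by an elliptic operator of order $1-m$ (with $m$ the order of $Q$), we may assume $q$ is homogeneous of degree $1$. This only reparametrizes null bicharacteristics.
\item[(b)] By the Darboux-type theorem for homogeneous symplectic geometry (Duistermaat--H\"ormander), there is a homogeneous canonical transformation $\chi$ from a conic neighbourhood of $(x_0,\xi_0)$ to one of a point $(0,\eta_0)$ with $\chi^*\eta_1=q$. The independence of $H_q$ from the radial field is exactly what allows $q$ to be completed to a homogeneous symplectic coordinate system.
\item[(c)] Quantize $\chi$ by an elliptic Fourier integral operator $F$ and use Egorov's theorem. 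Microlocally, $FQF^{-1}=D_{x_1}+B$ with $B$ of order $0$.
\item[(d)] Conjugate by an elliptic pseudodifferential operator $e^{i\Phi}$, where $\Phi$ solves the transport equations $D_{x_1}\Phi$ order by order, to remove $B$ modulo smoothing operators.
\end{itemize}

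After these steps $v=(\text{elliptic})\,u$ satisfies $D_{x_1}v\in C^\infty$ microlocally near $(0,\eta_0)$. The null bicharacteristics of $\eta_1$ are the lines $x_1\mapsto(x_1,x',0,\eta')$. Integrating the ODE in $x_1$ after a microlocal cutoff in $(x',\eta')$ shows that microlocal smoothness of $v$ at one point of such a line implies smoothness along the whole local segment. Elliptic FIOs map wave front sets via $\chi$, so this transports back to the statement for $u$ along the local bicharacteristic segment through $(x_0,\xi_0)$.

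\textbf{Step 3: globalization.} Let $\gamma$ be a maximally extended null bicharacteristic of $q$, and let $I$ be its parameter interval. The set of $s\in I$ with $\gamma(s)\notin WF(u)$ is open because $WF(u)$ is closed. It is also closed in $I$ by Step 2: if $\gamma(s_k)\notin WF(u)$ and $s_k\to s$, the local result at $\gamma(s)$, applied on a segment containing $s_k$ for large $k$, gives $\gamma(s)\notin WF(u)$. Since $I$ is connected, $\gamma$ either lies entirely in $WF(u)$ or is disjoint from it.

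\textbf{Main obstacle.} The hardest step is Step 2: building the canonical transformation in (b) and carrying out the FIO conjugation with the symbol bookkeeping in (c)--(d). If this becomes unwieldy, I would use H\"ormander's positive commutator argument instead. One constructs a symbol $a$ supported near the segment with $H_q a=-b^2+(\text{terms supported where }u\text{ is already known to be smooth})$, and uses $\mathrm{Im}\,\langle Qu,Au\rangle=0$ to propagate $H^s$-regularity one step at a time. This argument again uses only that $q$ is real and that $H_q$ is non-radial.
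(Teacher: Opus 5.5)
Your argument is correct. Note, though, that the paper does not prove this lemma; it imports it from Grigis--Sj\"ostrand \cite[Theorem~8.1]{Gr-Sj} and uses it as a black box. What you have written is the classical Duistermaat--H\"ormander proof:
\begin{itemize}
\item microlocal ellipticity off $\Char(Q)$;
\item reduction to order one by an elliptic factor, which only reparametrizes null bicharacteristics because $H_{eq}=eH_q$ on $\{q=0\}$;
\item the homogeneous Darboux theorem, whose hypothesis (linear independence of $dq$ and $\sum_j\xi_j\,dx_j$, equivalently of $H_q$ and the radial field) is exactly the paper's definition of real principal type;
\item elliptic FIO conjugation and Egorov's theorem to reach $D_{x_1}+B$, followed by removal of $B$ through transport equations;
\item the elementary propagation for $D_{x_1}$, transported back using $\chi_*H_q=H_{\eta_1}$.
\end{itemize}
Your open--closed argument in Step 3 correctly upgrades the local statement to maximally extended bicharacteristics.

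The trade-off is as follows. The citation keeps the paper short and delivers the lemma in the full generality stated. Your route makes it self-contained, but at the price of machinery (homogeneous canonical transformations, quantization by elliptic FIOs, Egorov's theorem) far heavier than anything the paper develops. Its main-text proof of Proposition~\ref{prop:llc} is designed to need nothing beyond this lemma and pullbacks of wave front sets.

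For the only operator the paper actually applies the lemma to, $P_{c,\alpha}$, one has $\partial_\tau p=-2\tau\neq 0$ on $\Char(P_{c,\alpha})$. So a lighter special case would suffice. On each sheet you can factor off a microlocally elliptic operator and reduce to a first-order evolution operator $D_t\pm c(x)\lvert D_x\rvert$ plus lower-order terms. Its bicharacteristics are already transversal to $\{t=\text{const}\}$, so the Darboux step can be bypassed, for instance via the Cauchy parametrix of Appendix~\ref{app:FIO-proof} or an energy estimate.

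Your positive-commutator fallback avoids FIOs entirely and yields the sharper $H^s$-quantitative version of the statement.
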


We will also use the following result on pullbacks of distributions. Let $X\subset \R^m$ and $Y\subset \R^n$ be open sets. For a smooth map $F:X\to Y$, we define the set of normals by
$$N_F  = \{(F(x),\eta) \in T^*Y \, : \, (dF_x)^t\eta = 0\}.$$
If $\Gamma \subset T^*Y$, define
$$F^*\Gamma = \{(x, (dF_x)^t\eta) \in T^*X \, : \, (F(x),\eta) \in \Gamma\}.$$

\begin{Lemma}[Pullbacks of wave front sets, {\cite[Theorem~8.2.4]{Horm1}}]\label{lemma:pullback-wavefront}
    Let $F:X \to Y$ be smooth and $u \in \mathcal{D}'(Y)$. If 
    $$N_F \cap WF(u) = \emptyset,$$
    then the pullback $F^*u \in \mathcal{D}'(X)$ is well defined, agrees with $u\circ F$ whenever $u$ is a function, and satisfies
    \begin{equation} \label{eq:wavefrontset-inclusion}
        WF(F^*u) \subset F^*WF(u).
    \end{equation}
\end{Lemma}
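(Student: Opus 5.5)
The plan is the classical oscillatory-integral argument from H\"ormander, which defines the pullback by continuity from smooth functions. Everything is local, so we fix $x_0 \in X$ and set $y_0 = F(x_0)$. We choose a cutoff $\psi \in C_c^\infty(Y)$ equal to $1$ near $y_0$ and a cutoff $\varphi \in C_c^\infty(X)$ supported so close to $x_0$ that $\psi \circ F = 1$ on $\supp \varphi$.

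\textbf{Step 1: the cone splitting.} Let $\Gamma$ be a closed cone in $T^*Y\setminus 0$ whose fibre $\Gamma_{y_0}$ is a small conic neighbourhood of $WF(u)_{y_0}$. Since $N_F \cap WF(u)=\emptyset$, after shrinking the neighbourhoods we get a constant $c>0$ with
\[
|(dF_x)^t\eta| \ge c|\eta| \quad \text{for } x\in\supp\varphi,\ \eta \in \Gamma_{y_0}.
\]
For smooth $u$ we write
\[
\varphi\, (u\circ F)(x) = (2\pi)^{-n}\int \widehat{\psi u}(\eta)\,\varphi(x)\,e^{i\langle F(x),\eta\rangle}\,d\eta .
\]
We split the $\eta$-integral into two regions.
\begin{itemize}
\item For $\eta\notin\Gamma_{y_0}$, $\widehat{\psi u}$ is rapidly decreasing, so this part of the integral converges to a smooth function.
\item For $\eta\in\Gamma_{y_0}$, $\widehat{\psi u}$ has only polynomial growth. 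Here the phase $x\mapsto\langle F(x),\eta\rangle$ has gradient $(dF_x)^t\eta$, which is of size at least $c|\eta|$, so it is non-stationary.
\end{itemize}

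\textbf{Step 2: defining $F^*u$.} Pairing with a test function $\chi$ and integrating by parts in $x$ $N$ times gains a factor $|\eta|^{-N}$ in the second region. This gives an integral that converges absolutely for general $u\in\mathcal D'(Y)$. We take this as the definition of $\langle F^*u,\varphi\chi\rangle$. Sequential continuity along $u_j\to u$ in $\mathcal D'_\Gamma$, H\"ormander's topology, then shows:
\begin{itemize}
\item the definition is independent of the choices made;
\item it agrees with $u\circ F$ for smooth $u$;
\item the local pieces patch together by a partition of unity.
\end{itemize}

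\textbf{Step 3: the wave front bound.} We compute
\[
\widehat{\varphi F^*u}(\xi)=(2\pi)^{-n}\iint \widehat{\psi u}(\eta)\,\varphi(x)\,e^{i(\langle F(x),\eta\rangle-\langle x,\xi\rangle)}\,dx\,d\eta .
\]
Take $\xi_0\ne0$ with $\xi_0\notin (dF_{x_0})^t\eta$ for every $\eta\in WF(u)_{y_0}$. By conicity and compactness, on a conic neighbourhood $V$ of $\xi_0$ and for $\eta\in\Gamma_{y_0}$ we have
\[
|(dF_x)^t\eta-\xi|\ge c(|\xi|+|\eta|).
\]
We then estimate the two regions separately.
\begin{itemize}
\item For $\eta\in\Gamma_{y_0}$, integrating by parts in $x$ gives decay $O\big((|\xi|+|\eta|)^{-N}\big)$, which is integrable in $\eta$.
\item For $\eta\notin\Gamma_{y_0}$, rapid decay of $\widehat{\psi u}$ handles large $|\eta|$. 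When $|\eta|\ll|\xi|$, the phase gradient is of size about $|\xi|$, and integration by parts again gives rapid decay in $\xi$.
\end{itemize}
Hence $\widehat{\varphi F^*u}$ is rapidly decreasing on $V$, which proves \eqref{eq:wavefrontset-inclusion}.

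\textbf{Main obstacle.} The delicate step is the uniform bookkeeping in Step 3. In the region $\eta\notin\Gamma_{y_0}$ with $|\eta|$ comparable to $|\xi|$, neither mechanism applies directly, so the cones must be chosen so that either $\widehat{\psi u}$ or the phase gradient controls every region. I would first handle this by a three-way partition of the $\eta$-space:
\[
|\eta|<\epsilon|\xi|,\qquad \eta\in\Gamma_{y_0},\qquad \eta\notin\Gamma_{y_0} \text{ with } |\eta|\ge\epsilon|\xi|.
\]
In the last region the rapid decay of $\widehat{\psi u}$ in $|\eta|\gtrsim|\xi|$ suffices.
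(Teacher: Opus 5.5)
Your proposal is correct and is essentially the proof of H\"ormander's Theorem~8.2.4, which the paper only cites and does not reprove: localization, splitting $\widehat{\psi u}$ into the cone $\Gamma_{y_0}$ and its complement using $N_F\cap WF(u)=\emptyset$, non-stationary phase, and uniqueness from the sequential density of smooth functions in $\mathcal{D}'_\Gamma$. One point you should state explicitly is that $\supp\psi$ must be small enough that $WF(u)_y$ lies in the interior of $\Gamma_{y_0}$ for every $y\in\supp\psi$, because that is what makes $\widehat{\psi u}$ rapidly decreasing off $\Gamma_{y_0}$; this same decay also handles every $\eta\notin\Gamma_{y_0}$ with $|\eta|\ge\epsilon|\xi|$, so your ``main obstacle'' is not a real difficulty.
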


This result will be applied in Section~\ref{sec:reduction} to the distribution $u^{y,A}$ and the inclusion map $\iota$ of the initial surface $\{t=0\}$ into $\R\times\R^n$.

\section{Reduction to Travel-Time Data} \label{sec:reduction}

The goal of this section is to prove Theorem \ref{thm:reduction}. We begin by introducing the first arrival time function.
\begin{Definition}
    For $y,z \in \R^n$, define the first arrival time at $z$ from $y$ by
    \begin{equation}\label{eq:first-arrival-time}
    T_c(y,z) = \sup \left( \{0\}\cup\left\{ t >0\ : \ u^{y,1}|_{(0,t)\times V} = 0 \text{ for some open neighborhood } V \subset \R^n \text{ of }z \right\}\right).
\end{equation}
\end{Definition}

Clearly, $T_c(y,z)$ is well defined in $[0,+\infty]$. If $T> M\diam_e(\source \cup \rec)$, then $T_c(y,z)$ is determined by the measurement data $D_c^{\source,\rec,T}$ for all $y \in \source$ and $z\in \rec$. Moreover by linearity, $u^{y,A}=Au^{y,1}$, so replacing $u^{y,1}$ in \eqref{eq:first-arrival-time} by $u^{y,A}$ for any $A\neq 0$ does not change the definition of the first arrival time function.

We now use results on propagation of singularities and pullbacks of distributions from Subsection~\ref{subsec:propagation-preliminaries} to give a complete description of the wave front set of $u^{y,A}$. Consider the Lorentzian metric
$\bar{g}$ on $\R\times\R^n$ defined in \eqref{eq:lorentzmetric}. Recall that given a point $(t_0,x_0) \in \R\times \R^n$, the \emph{light cone} at $(t_0,x_0)$ is defined as the union of all maximal lightlike (or null) geodesics passing through $(t_0,x_0)$. We now define a \emph{lifted} version of this object in the cotangent bundle.

\begin{Definition}
    Let $(t_0,x_0) \in \R\times \R^n$ and $p$ be as in \eqref{eq:principalsymb}. The lifted light cone based at $(t_0,x_0)$, denoted by $LLC_{g_c}(t_0,x_0)$, is the union of all null bicharacteristics of $p$ in $T^*(\R\times \R^n)\setminus 0$ that intersect $T^*_{(t_0,x_0)}(\R\times\R^n)$. 
    
\end{Definition}

The next lemma implies that the lifted light cone based at $(t_0,x_0)$ is the union of lifts to $T^\ast (\R\times \R^n)\setminus 0$ of all maximal null geodesics of the Lorentzian metric $\bar g$ passing through $(t_0,x_0)$. 

\begin{Lemma}
\label{lemma:projection_to_null_geos}
Let $\bar g$ be as in \eqref{eq:lorentzmetric} and $p$ as in \eqref{eq:principalsymb} be the principal symbol of the wave operator $\p_t^2-\Delta_{g_c}$.
The base projection of a null bicharacteristic
of $p$ is a null geodesic
of $\overline{g}$.
\end{Lemma}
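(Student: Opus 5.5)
The plan is to compare the Hamiltonian flow of $p$ with the cogeodesic flow of $\bar g$ directly in coordinates. The principal symbol is exactly
\[
p(t,x,\tau,\xi)=\bar g^{-1}_{(t,x)}((\tau,\xi),(\tau,\xi)),
\]
the dual metric quadratic form. We use the standard fact that integral curves of the Hamiltonian $H(z,\zeta)=\tfrac12 \bar g^{ij}(z)\zeta_i\zeta_j$ on $T^*(\R\times\R^n)$ project to geodesics of $\bar g$. The paper's $p$ equals $2H$, so it only rescales the parameter by a constant. I will verify this in a few lines rather than cite it.

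First write Hamilton's equations for $H$:
\[
\dot z^i=\bar g^{ij}\zeta_j, \qquad \dot\zeta_k=-\tfrac12 \p_k\bar g^{ij}\zeta_i\zeta_j .
\]
The first equation says $\zeta_k=\bar g_{ki}\dot z^i$, so $\zeta$ is the metric dual of the velocity. Differentiate this identity along the curve and substitute into the second equation. Using $\p_k \bar g^{ij}=-\bar g^{ia}\bar g^{jb}\p_k\bar g_{ab}$, this gives
\[
\bar g_{ki}\ddot z^i+\p_l\bar g_{ki}\dot z^l\dot z^i-\tfrac12\p_k\bar g_{ab}\dot z^a\dot z^b=0,
\]
which is exactly $\bar g_{ki}(\ddot z^i+\Gamma^i_{ab}\dot z^a\dot z^b)=0$, the geodesic equation. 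Replacing $H$ by $p=2H$ changes the parameter $s$ to $2s$, and affinely reparametrized geodesics are still geodesics. The metric here, $\bar g=-dt^2+c^{-2}e$, is smooth and nondegenerate, so no further computation specific to the conformal structure is needed.

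For nullity, $p$ is constant along its Hamiltonian flow, and a null bicharacteristic lies in $\{p=0\}$. Hence
\[
\bar g(\dot z,\dot z)=\bar g^{-1}(\zeta,\zeta)=p=0
\]
along the curve. The velocity $\dot z$ is nonzero because $\zeta\ne0$ (we are in $T^*\setminus 0$) and $\dot z$ is $\zeta$ raised by a nondegenerate metric. Therefore the base projection is a regular null geodesic. Maximality transfers as well: a maximal bicharacteristic corresponds to a maximal geodesic, since one can be extended exactly when the other can via $\zeta=\bar g\dot z$. By Lemma~\ref{lemma:globalhyp}-type completeness, both are in fact defined for all time.

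The only delicate point is the index bookkeeping with derivatives of the inverse metric. I expect it to be routine; no serious obstacle arises. One could also note that $\p_\tau p=-2\tau\neq0$, so the projected curve is transversal to the time slices, although this is not required for the lemma.
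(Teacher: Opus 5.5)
Your argument is correct and is the same as the paper's: Hamilton's equations are turned into the geodesic equation by eliminating the fiber variable, and nullity follows because $\bar g(\dot z,\dot z)$ is a constant multiple of $p=0$. The only difference is that you do the elimination for a general metric in index notation, absorbing $p=2H$ as an affine reparametrization, whereas the paper computes explicitly with $\dot t=-2\tau$ and $\dot x=2c(x)^2\xi$ and matches the geodesic equation of $c^{-2}e$. Your side remark on completeness is true, though it follows more directly from $\dot\tau=0$ and completeness of $g_c$ than from Lemma~\ref{lemma:globalhyp}.
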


The result is well-known, but we give a proof in Appendix~\ref{app:postponed-proofs} for the convenience of the reader.

\begin{Proposition}\label{prop:llc}
    For each $A\in \R\setminus\{0\}$ and $y\in \R^n$, the wave front set of the solution $u^{y,A}$ of \eqref{ivp} is the lifted light cone $LLC_{g_c}(0,y).$ 
\end{Proposition}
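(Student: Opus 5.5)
We prove the two inclusions $WF(u^{y,A})\subset LLC_{g_c}(0,y)$ and $LLC_{g_c}(0,y)\subset WF(u^{y,A})$ separately. Both rest on Lemma~\ref{propa} applied to $P_{c,\alpha}$ on $X=\R\times\R^n$; we already checked that $P_{c,\alpha}$ is of real principal type, and as a differential operator it is properly supported. So $WF(u^{y,A})\subset\Char(P_{c,\alpha})$, and $WF(u^{y,A})$ is a union of maximally extended null bicharacteristics. Every point of $\Char(P_{c,\alpha})$ has $\tau\neq 0$ and $\dot t=\partial_\tau p=-2\tau\neq 0$. By Lemma~\ref{lemma:projection_to_null_geos} and Lemma~\ref{lemma:globalhyp}, each maximal null bicharacteristic projects to an inextendible null geodesic, which therefore meets $\Sigma_0=\{t=0\}$ exactly once. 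Hence each such bicharacteristic contains exactly one point $(0,x_0,\tau_0,\xi_0)$ with $\tau_0=\pm c(x_0)|\xi_0|\neq 0$ and $\xi_0\neq 0$. The whole question is therefore which of these points over $\{t=0\}$ lie in $WF(u^{y,A})$.

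\emph{Upper inclusion.} Take a point $(0,x_0,\tau_0,\xi_0)\in\Char(P_{c,\alpha})$ and suppose it lies in $WF(u)$. We want to show $x_0=y$. Apply Lemma~\ref{lemma:pullback-wavefront} with $F=\iota\colon \R^n\to\R\times\R^n$, $\iota(x)=(0,x)$. The normal set is $N_\iota=\{(0,x,\tau,0)\}$. Since characteristic covectors have $\xi\neq0$, we get $N_\iota\cap WF(u)=\emptyset$, so $\iota^*u$ is defined and $WF(\iota^*u)\subset\iota^*WF(u)$. This inclusion alone goes the wrong way for our purpose, so we also need information on $u$ away from $x=y$ near $t=0$. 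Lemma~\ref{sharp} gives this directly: $u=0$ near $(t,x)$ whenever $|t|<d_c(x,y)$, hence $u$ vanishes in a neighbourhood of $(0,x_0)$ for every $x_0\neq y$. So $WF(u)$ contains no covector over $(0,x_0)$ with $x_0\neq y$, and every bicharacteristic in $WF(u)$ passes through $T^*_{(0,y)}$. This proves $WF(u)\subset LLC_{g_c}(0,y)$.

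\emph{Lower inclusion.} This is where the pullback lemma does its work. Suppose some bicharacteristic through $(0,y,\tau_0,\xi_0)$, with $\tau_0=\pm c(y)|\xi_0|$, is not in $WF(u)$. Bicharacteristics through $T^*_{(0,y)}$ are parametrized by $\xi_0\in\R^n\setminus0$, up to scaling, together with a sign of $\tau_0$. We argue by contradiction using the initial data:
\[
\iota^*u=A\delta_y,\qquad WF(A\delta_y)=\{(y,\xi):\xi\neq0\}.
\]
The identification $\iota^*u=A\delta_y$ uses the approximation property in Lemma~\ref{well-posedness} together with continuity of the pullback under the wave front condition. By the inclusion $WF(\iota^*u)\subset\iota^*WF(u)$, every direction $\xi_0$ must be the image $(d\iota)^t(\tau,\xi_0)=\xi_0$ of some element $(0,y,\tau,\xi_0)\in WF(u)$ with $\tau=+c|\xi_0|$ or $\tau=-c|\xi_0|$. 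So for each $\xi_0$ at least one of the two sheets lies in $WF(u)$.

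To obtain both signs, we use the symmetry $\partial_tu(0)=0$. The operator and the Cauchy data are invariant under $t\mapsto -t$, so by uniqueness in Lemma~\ref{well-posedness} we have $u(-t,x)=u(t,x)$. This symmetry maps $WF(u)$ to itself via $(t,x,\tau,\xi)\mapsto(-t,x,-\tau,\xi)$, which exchanges $(0,y,\tau,\xi_0)$ with $(0,y,-\tau,\xi_0)$. Hence both covectors lie in $WF(u)$. Lemma~\ref{propa} then puts the full bicharacteristics through them into $WF(u)$, which gives $LLC_{g_c}(0,y)\subset WF(u)$.

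The main obstacle is the lower inclusion, namely justifying $\iota^*u=A\delta_y$ rigorously for the distributional solution and applying the pullback lemma globally near $t=0$. The plan is to approximate by smooth data, use the sequential continuity in Lemma~\ref{well-posedness}, and use that pullback is continuous in the Hörmander topology $\mathcal D'_\Gamma$ for a closed cone $\Gamma$ with $\Gamma\cap N_\iota=\emptyset$ (here $\Gamma=\Char(P_{c,\alpha})$). If that continuity argument proves delicate, we would instead verify the time-reversal symmetry at the level of smooth approximants, where it is immediate, and pass to the limit.
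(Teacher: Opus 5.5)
Your proposal is correct and follows the paper's proof essentially step for step. For $WF(u^{y,A})\subset LLC_{g_c}(0,y)$ you use propagation of singularities, the fact that every maximal null bicharacteristic meets $\{t=0\}$, and sharp finite speed of propagation. For the reverse inclusion you combine $\iota^*u=A\delta_y$ with the time-reversal symmetry $u(-t,x)=u(t,x)$, which puts both sheets over $(0,y)$ into $WF(u)$. The paper simply asserts $\iota^*u=A\delta_y$, so your plan to justify it through $\mathcal{D}'_\Gamma$-continuity of the pullback is extra care rather than a deviation.
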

\begin{proof}
    Assume without loss of generality that $A=1$, and write $u^y = u^{y,1}$. We first show that all characteristic covectors at $(0,y)$ belong to $WF(u^y)$. That is 
    \begin{equation}
        \label{eq:Char_set_at_0_y}
    \{(0,y,\tau,\xi) \in T^*(\R\times \R^n) \setminus 0 \, : \, \tau = \pm c(y)|\xi|\}
    =
    LLC_{g_c}(0,y) \cap T^*_{(0,y)}(\R\times\R^n)
    \subset 
    WF(u^y).
    \end{equation}
    
    Let $\iota:\R^n \to \R\times \R^n$ be the inclusion $\iota(x) = (0,x)$. By Lemma~\ref{lemma:pullback-wavefront}, the pullback $\iota^*u^y \in \mathcal{D}'(\R^n)$ is well defined provided the wave front set of $u^y$ is disjoint from
    \begin{equation*}
        N_\iota := \left\{(\iota(x), \zeta) \in T^*(\R\times\R^n)\,  :\, x \in \R^n, \, (d\iota_x)^t(\zeta) = 0 \right\} 
    \end{equation*}
    Since $d\iota_x(v) = (0,v)$, its transpose satisfies $(d\iota_x)^t(\tau,\xi) = \xi$. Therefore,
    \begin{equation*}
        N_\iota = \left\{(0,x,\tau,0) \, : x \in \R^n, \, \tau \in \R \right\}.
    \end{equation*}
    Since $P_{c,\alpha}u^y=0$, Lemma~\ref{propa} gives 
    \begin{equation}\label{eq:WFsubsetChar}
        WF(u^y) \subset \Char(P_{c,\alpha}):= \{(t,x,\tau,\xi) \in T^*(\R\times \R^n) \setminus 0 \, : \, \tau = \pm c(x)|\xi|\},
    \end{equation}
    which is clearly disjoint from $N_\iota$. So $\iota^*u^y$ is well defined in $\mathcal{D}'(\R^n)$. 
    Moreover, by the wave front set inclusion \eqref{eq:wavefrontset-inclusion} given in Lemma~\ref{lemma:pullback-wavefront},
    \begin{equation}\label{eq:WFpullback}
        WF(\iota^* u^y) \subset \iota^*(WF(u^y)) =\{(x,\eta)\, : \, (0,x,\tau,\eta) \in WF(u^y) \text{ for some }\tau \in \R\}.
    \end{equation}
    The initial condition $u^y(0,\dummy) = \delta_y$ gives $\iota^*(u^y) = u^y(0,\dummy)=\delta_y$. So we have
    $$WF(\iota^*u^y) = \{(y,\eta) \in T^*_y\R^n\, : \, \eta \neq 0\}.$$
    Therefore by \eqref{eq:WFpullback}, for each $\eta \in T^*_y\R^n\setminus 0$, there exists $\tau \in \R$ such that
    $(0,y,\tau,\eta) \in WF(u^y),$
    and by \eqref{eq:WFsubsetChar}, $\tau = -c(y)|\eta|$ or $\tau = +c(y)|\eta|$. 
    
    We now show that in fact, both of these covectors are in $WF(u^y)$. Indeed, it is easy to verify that the function 
    $$v(t,x) = u^y(-t,x)$$
    solves the same Cauchy problem \eqref{ivp} as $u^y$, and satisfies the same approximation property stated in Lemma~\ref{well-posedness}. 
    So by uniqueness, $v= u^y$. Consequently, $WF(u^y)$ is invariant under the map
    $$
    (t,x,\tau,\xi) \mapsto (-t,x,-\tau, \xi).
    $$
    Hence for every $\eta\in T^*_y\R^n\setminus 0$, both characteristic covectors $(0,y,\pm c(y)|\eta|,\eta)$ are in $WF(u^y)$. This proves the inclusion \eqref{eq:Char_set_at_0_y}. The propagation of singularities result, Lemma~\ref{propa}, now implies that every null bicharacteristic in $\R\times \R^n$ that passes over $(0,y)$ is contained in $WF(u^y)$. Therefore,
    \begin{equation}\label{eq:reverse-inc2}
        LLC_{g_c}(0,y) \subset WF(u^y).
    \end{equation}
    
    We now prove the reverse inclusion. Suppose $\rho =(t,x,\tau,\xi)\in WF(u^y)$. By \eqref{eq:WFsubsetChar}, $\rho$ is a characteristic covector, i.e., $\tau = \pm c(x)|\xi|$. Let 
    $$\Gamma(s) = (t(s),x(s),\tau(s),\xi(s))$$
    be the maximally extended null bicharacteristic of $p$ through $\rho$. By Lemma~\ref{lemma:projection_to_null_geos} its projection $(t(s),x(s))$ is a maximally extended null geodesic of $\bar g$. So by Lemma \ref{lemma:globalhyp}, there exists $s_0\in \R$ such that $t(s_0)=0$. Applying Lemma \ref{propa} again, we must have
    \begin{equation}\label{eq:WF_at_0}
        (0,x(s_0),\tau(s_0),\xi(s_0)) \in WF(u^y).
    \end{equation}
    If $x(s_0) \neq y$, then Lemma~\ref{sharp} implies that $u^y$ vanishes near $(0,x(s_0))$. This contradicts \eqref{eq:WF_at_0}, and we conclude that $x(s_0)=y$. In particular, $\Gamma$ passes over $(0,y)$, and therefore, $\rho \in LLC_{g_c}(0,y)$. Thus, we have proved that
    \begin{equation}\label{eq:inc2}
        WF(u^y) \subset LLC_{g_c}(0,y).
    \end{equation}
    This along with \eqref{eq:reverse-inc2} proves the claim.
\end{proof}

We can now show that the first arrival time function $T_c(y,\dummy)$ coincides with the Riemannian distance function $d_{c}(y,\dummy)$, thus reducing the problem of recovering $c$ to a geometric inverse problem.

\begin{Proposition}\label{prop:first-arrival}
    Let $c \in C^\infty(\R^n)$ satisfy \eqref{eq:uniform_sound_speed}, and let $g_c$ be the conformally Euclidean metric in \eqref{eq:conf_euc_metric}. Then for every $y,z\in \R^n$,
    \begin{equation}\label{eq:Teqd}
        T_c(y,z) = d_c(y,z).
    \end{equation}
\end{Proposition}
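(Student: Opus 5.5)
We prove the two inequalities $T_c(y,z)\ge d_c(y,z)$ and $T_c(y,z)\le d_c(y,z)$ separately.

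\emph{Lower bound.} This follows from Lemma~\ref{sharp}. If $d_c(y,z)=0$ there is nothing to prove. Otherwise, fix $0<t<d_c(y,z)$ and choose $\epsilon>0$ with $t+2\epsilon<d_c(y,z)$. Let $V=\{x : d_c(x,z)<\epsilon\}$, an open neighborhood of $z$. For $(s,x)\in(0,t)\times V$ the triangle inequality gives
\[
d_c(x,y)>d_c(y,z)-\epsilon>t+\epsilon>|s|.
\]
Hence Lemma~\ref{sharp} shows that $u^{y,1}$ vanishes near every point of $(0,t)\times V$, so $u^{y,1}|_{(0,t)\times V}=0$. Therefore $t$ belongs to the set in \eqref{eq:first-arrival-time}, and letting $t\uparrow d_c(y,z)$ gives $T_c(y,z)\ge d_c(y,z)$.

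\emph{Upper bound.} This uses Proposition~\ref{prop:llc}. Set $t_0=d_c(y,z)$. If $z=y$, then $u^{y,1}(0,\cdot)=\delta_y$ and $u^{y,1}$ cannot vanish on any $(0,t)\times V$ with $y\in V$: by Proposition~\ref{prop:llc} it has wave front set over $(s,y)$ for small $s>0$ along the vertical-ish null geodesics, and indeed over points $(s,x)$ with $x\to y$. So $T_c(y,y)=0$.

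For $z\ne y$, the metric $g_c$ is complete, so by Hopf--Rinow there is a unit-speed $g_c$-geodesic $\gamma$ from $y$ to $z$ of length $t_0$. The curve $s\mapsto(s,\gamma(s))$ is a future-directed null geodesic of $\bar g$ through $(0,y)$. It is the projection of a null bicharacteristic of $p$, obtained as the cotangent lift via $\bar g$, with $\xi$ dual to $\dot\gamma$ and $\tau$ fixed so that the covector is characteristic. Lemma~\ref{lemma:projection_to_null_geos}, read in the converse direction, or the standard fact that null bicharacteristics are exactly the lifts of null geodesics, gives this. By Proposition~\ref{prop:llc} this bicharacteristic lies in $WF(u^{y,1})$. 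In particular $(s,\gamma(s))\in\supp u^{y,1}$ for all $s\in(0,t_0]$.

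Now suppose $u^{y,1}|_{(0,t)\times V}=0$ for some $t>t_0$ and some open $V\ni z$. Then $(s,\gamma(s))\in(0,t)\times V$ for $s$ slightly less than $t_0$, and this point is in $WF(u^{y,1})$, which is a contradiction. So every admissible $t$ satisfies $t\le t_0$, giving $T_c(y,z)\le t_0$.

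\emph{Main obstacle.} The delicate step is the identification of the lifted geodesic as a genuine null bicharacteristic through $T^*_{(0,y)}$. We need to check that Hamilton's equations for $p=-\tau^2+c^2|\xi|^2$ have projections reparametrizing $(s,\gamma(s))$ with a nonvanishing covector. This can be done directly: $\dot t=-2\tau$ and $\dot x=2c^2\xi$, with $\tau$ conserved. This reduces to the cogeodesic flow of $g_c$, whose projections are $g_c$-geodesics traversed at speed $|\dot x|_{g_c}=|\dot t|$. Choosing $\xi(0)$ as the $g_c$-dual of $\dot\gamma(0)$ and $\tau=-c(y)|\xi(0)|$ makes the curve pass over $(s,\gamma(s))$.
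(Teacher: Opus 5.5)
Your proof is correct and follows the paper's argument: the lower bound comes from Lemma~\ref{sharp}, and the upper bound comes from lifting a minimizing $g_c$-geodesic to a null bicharacteristic in $LLC_{g_c}(0,y)=WF(u^{y,1})$ via Proposition~\ref{prop:llc}, and you usefully spell out the uniform neighborhood $V$ and the converse direction of Lemma~\ref{lemma:projection_to_null_geos}, both of which the paper uses only implicitly. There is one slip in the case $z=y$: $WF(u^{y,1})$ is in fact empty over $(s,y)$ for small $s>0$, since a point there would require a geodesic loop at $y$ of length $s$, and no null geodesic is vertical; your second clause, using the points $(s,\gamma(s))$ with $\gamma$ a unit-speed geodesic from $y$, is the correct reason and suffices on its own.
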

\begin{proof}
Let us first consider the case $y=z$. By Proposition~\ref{prop:llc}, $u^{y,1}$ is singular, and hence non-vanishing, in every neighborhood of $(0,y)$. So it follows from definition that 
$$T_c(y,y)= 0 = d_c(y,y).$$
Now suppose $y\neq z$. By Lemma \ref{sharp}, the solution $u^{y,1}$ of \eqref{ivp} vanishes in a neighborhood of $(t,z)$ whenever $0 < t < d_{c}(y,z)$. Hence it follows directly from the definition of the first arrival time that 
\begin{equation}\label{eq:Tgeqd}
        T_{c}(y,z) \geq d_{c}(y,z).
\end{equation}

We now prove the reverse inequality. By the assumption \eqref{eq:uniform_sound_speed}, the metric $g_c$ is complete, and the Hopf-Rinow theorem gives a minimizing unit-speed $g_c$-geodesic $\gamma:[0,\ell]\to \R^n$ such that $\gamma(0)=y$ and $\gamma(\ell)=z$. Here $\ell=d_{c}(y,z)$. Thus, the spacetime curve 
$
\bar \gamma(s) = (s, \gamma(s)), \: 0\leq s \leq \ell,
$
is a null geodesic for the Lorentzian metric $\bar{g}$ on $\R\times \R^n$. Moreover, it follows from Lemma~\ref{lemma:projection_to_null_geos} that $\bar \gamma$ lifts to an integral curve $\Gamma$ of $H_p$ on $T^*(\R \times \R^n)$. In particular, there is a nonzero covector $(\ell,z,\tau(\ell),\xi(\ell)) = \Gamma(\ell)\in T^*_{(\ell,z)}(\R\times \R^n)$ such that
$$(\ell,z,\tau(\ell),\xi(\ell)) \in LLC_{g_c}(0,y).$$
By Proposition \ref{prop:llc}, $LLC_{g_c}(0,y) = WF(u^{y,1})$. Therefore, $u^{y,1}$ is not smooth in any neighborhood of $(\ell,z)$, and cannot vanish in any such neighborhood. So it follows from the definition of the first arrival time function that
\begin{equation}\label{Tleqd}
    T_c(y,z) \leq \ell = d_c(y,z).
\end{equation}
Combining the inequalities \eqref{eq:Tgeqd} and \eqref{Tleqd} proves the result.
\end{proof}

\begin{proof}[Proof of Theorem \ref{thm:reduction}]
For $j\in \{1,2\}$, let $u_j^{y,A}$ denote the solution of \eqref{ivp} for $c=c_j$. We first observe that every relevant arrival occurs before time $T$. Indeed, for all $y \in \source$, $z\in \rec$, and $j \in \{1,2\}$, we have
$$d_{c_j}(y,z) \leq Md_e(y,z) \leq M\diam_e(\source\cup\rec) < T.$$
By Proposition \ref{prop:first-arrival}, we have
$$T_{c_j}(y,z) = d_{c_j}(y,z) < T $$
for all $y \in \source$ and $z \in \rec$. Thus, $T_{c_j}$ is completely determined by the restriction of $u_j^{y,A}$ to $(0,T)\times \rec$.

We now prove \eqref{eq:TTT_agree}. Let $y_1 \in \source$ and choose any $A_1 \neq 0$. By the assumption \eqref{eq:wave_data}, there exist $y_2 \in \source$ and $A_2 \neq 0$ such that
\[
u_1^{y_1,A_1}|_{(0,T)\times \rec}
=
u_2^{y_2,A_2}|_{(0,T)\times \rec}.
\]
This implies by definition that
\[
T_{c_1}(y_1,\dummy)\big|_{\rec}
=
T_{c_2}(y_2,\dummy)\big|_{\rec},
\]
and Proposition \ref{prop:first-arrival} yields
\[
d_{c_1}(y_1,\dummy)\big|_{\rec}
=
d_{c_2}(y_2,\dummy)\big|_{\rec} \in \TTD{c_2}{\source,\rec}(\source).
\]
Consequently, 
$$\TTD{c_1}{\source,\rec}(\source) \subset \TTD{c_2}{\source,\rec}(\source).$$
We get the reverse inclusion by interchanging the roles of $c_1$ and $c_2$, thus proving the equality \eqref{eq:TTT_agree}.
\end{proof}

\section{Recovery of the Wave Speed from Travel-Time Data} \label{sec:geometric-recovery}

Before embarking on the proofs in the subsections below, we record a lemma on the rigidity of conformal maps.
The lemma is not new, but we could not find it stated as such in the literature so we give a proof.

\begin{Lemma}[Conformal maps fixing boundary points]
\label{lma:boundary-fixing-conformal}
Let $\Omega\subset\R^n$, $n\geq2$, be a smooth connected open set and $f\colon\bar\Omega\to\bar\Omega$ a continuous
map which is a smooth conformal map in the interior.
If $f(x)=x$ for all $x\in\Gamma$ for a relatively open $\Gamma\subset\partial\Omega$, then $f$ is the identity.
\end{Lemma}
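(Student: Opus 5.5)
The plan is to split into the cases $n=2$ and $n\geq 3$, and in each case to turn the boundary condition on $\Gamma$ into an interior statement that forces $f$ to be the identity.

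\emph{The case $n\geq 3$.} Here Liouville's theorem applies: a smooth conformal map on the connected open set $\Omega$ is the restriction of a M\"obius transformation $\phi$ of $\R^n\cup\{\infty\}$. Since $f$ is continuous on $\bar\Omega$, it agrees with $\phi$ on $\bar\Omega$; a priori $\phi$ might have its pole on $\partial\Omega$, but continuity of $f$ at boundary points rules out a pole in $\bar\Omega$. So $\phi(x)=x$ for all $x\in\Gamma$. A M\"obius transformation fixing a relatively open piece of a smooth hypersurface must be the identity. To see this, note that the fixed point set of a non-identity M\"obius map is contained in a generalized sphere of dimension at most $n-2$, or in a round sphere or hyperplane of dimension $n-1$ (the fixed set of an inversion or reflection). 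Then either $\Gamma$ is too small, or $\Gamma$ is an open piece of a round sphere or hyperplane $S$ and $\phi$ is the reflection or inversion in $S$. In the latter case $\phi$ swaps the two sides of $S$ locally. But $f$ maps $\Omega$ into $\bar\Omega$, and $\Omega$ lies locally on one side of $\Gamma$, so $f(\Omega)$ near $\Gamma$ would have to lie on the other side, outside $\bar\Omega$. This is a contradiction, so $\phi=\id$ and $f=\id$.

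\emph{The case $n=2$.} Identify $\R^2$ with $\C$. A smooth conformal map is holomorphic or antiholomorphic on each component, and $\Omega$ is connected. If $f$ is holomorphic, consider $h=f-\id$, which is holomorphic in $\Omega$, continuous up to $\bar\Omega$, and vanishes on the smooth arc $\Gamma$. By the Schwarz reflection principle after a conformal straightening of $\Gamma$, or directly by the boundary uniqueness theorem (Privalov / Luzin--Privalov), we get $h\equiv 0$. For a clean argument, take a small disc $D$ centered at a point of $\Gamma$ with $D\cap\Omega$ simply connected, and map $D\cap\Omega$ conformally onto a half-disc so that the arc $\Gamma\cap D$ goes to the real segment; this works since $\Gamma$ is smooth and Carath\'eodory's theorem gives continuity up to the boundary. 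Then $h$ composed with this map is continuous up to the segment and real (zero) there, so it reflects to a holomorphic function vanishing on a segment. Hence it is identically zero, and $h\equiv0$ on $\Omega$ by connectedness.

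If $f$ is antiholomorphic, then $\bar f$ is holomorphic and equals $\bar z$ on $\Gamma$. The aim is to show $\Gamma$ lies on a line and $f$ is the reflection, which is then excluded by the same side argument as for $n\ge 3$. Concretely, $\bar f(z)-\bar z$ is not holomorphic, so the previous argument does not apply directly. Instead I would note that $\bar z|_\Gamma$ extends holomorphically into $\Omega$ near $\Gamma$, which forces $\Gamma$ to be real-analytic, and then that $f$ is the anticonformal Schwarz reflection across $\Gamma$. That reflection maps points of $\Omega$ near $\Gamma$ to the opposite side of $\Gamma$, outside $\bar\Omega$, contradicting $f(\Omega)\subset\bar\Omega$.

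I expect this orientation-reversing case, in both dimensions, to be the main obstacle, namely making the ``locally swaps sides'' argument rigorous. I would handle it by a first-order computation at a point $x_0\in\Gamma$. Since $f=\id$ on $\Gamma$, its differential at $x_0$ (one-sided, using smoothness of the extension) fixes $T_{x_0}\partial\Omega$. Being conformal, it must then be either the identity or the reflection in the tangent plane. The reflection sends the inward normal to the outward normal, contradicting $f(\Omega)\subset\bar\Omega$. This requires $C^1$ regularity up to $\Gamma$, which comes from the explicit M\"obius form when $n\ge3$ and from reflection when $n=2$.
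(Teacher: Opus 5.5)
The gap is in the planar antiholomorphic case. You flag it as the main obstacle, but you do not close it.

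\begin{itemize}
\item Your argument rests on the claim that, since $\bar f$ is holomorphic in $\Omega$, continuous up to $\Gamma$ and equal to $\bar z$ on $\Gamma$, the arc $\Gamma$ must be real-analytic. This is a nontrivial regularity statement about arcs admitting a one-sided Schwarz function, and you give no argument for it.
\item Two later steps depend on it: the identification of $f$ with the Schwarz reflection across $\Gamma$, and the $C^1$ regularity up to $\Gamma$ that your last paragraph says ``comes from reflection when $n=2$''. The reflection principle does not apply directly to $\bar f$ on a merely smooth curved arc, because its boundary values $\bar z$ are not real there.
\item Your intermediate aim that $\Gamma$ lies on a line is also false. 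If $\Gamma$ is an arc of the circle $\abs{z-a}=r$, the antiholomorphic inversion $z\mapsto a+r^2/\overline{(z-a)}$ fixes $\Gamma$ pointwise. It is excluded only by the condition $f(\Omega)\subset\bar\Omega$.
\end{itemize}

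The missing idea is to conjugate $f$ itself, rather than $f-z$, by the straightening map $\psi\colon D\cap\Omega\to\{\abs{w}<1,\ \operatorname{Im}w>0\}$ you already built.

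\begin{itemize}
\item Since $f$ is open on $\Omega$ and the interior of $\bar\Omega$ is $\Omega$, we have $f(\Omega)\subset\Omega$.
\item By continuity of $f$ at $p$ and Carath\'eodory's theorem for $\psi$, the map $F=\psi\circ f\circ\psi^{-1}$ is defined on a small half-disc around $\psi(p)$. It is antiholomorphic there, continuous up to the real segment, equal to the identity on that segment, and takes values in the upper half-plane.
\item Then $w\mapsto\overline{F(w)}$ is holomorphic and real on the segment. Schwarz reflection and the identity theorem give $\overline{F(w)}=w$, so $F(w)=\bar w$.
\item But $F(w)=\bar w$ sends the upper half-disc into the lower half-plane, a contradiction.
\end{itemize}

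The same computation with $F$ holomorphic gives $F(w)=w$, which also covers your holomorphic case. No regularity of $\Gamma$ is needed beyond what Carath\'eodory gives.

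The rest of your proposal is valid but differs from the paper. For $n\ge3$, the paper also obtains $Df(p)=I$ on $\Gamma$ by your first-order argument. It then uses the normal form $f(x)=b+\lambda\abs{x-a}^{-\eps}Q(x-a)$. There $\eps=2$ would force the direction of $p-a$ to be locally constant along the $(n-1)$-dimensional $\Gamma$, and the affine case is immediate. This avoids the classification of fixed-point sets of M\"obius maps, which you quote without proof. In the holomorphic planar case, the paper extends $f(z)-z$ by zero across $\Gamma$ and applies Rad\'o's theorem instead of a Riemann map. It dismisses the antiholomorphic case with a one-line orientation remark, which the conjugation argument above makes rigorous.
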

\begin{proof}
When $n=2$, we may identify $\Omega\subset\C$.
As a conformal map, $f$ is either holomorphic or antiholomorphic on $\Omega$.
If $f$ were antiholomorphic, it would have to reverse the local orientation of the boundary, which is incompatible with fixing all points on $\Gamma$.
Therefore $f$ is holomorphic, as is $h(z)=f(z)-z$.
Fix any $p\in\Gamma$ and let $U$ be a neighborhood of it in $\C$ so that $U\cap\partial\Omega\subset\Gamma$.
If we extend $h$ by zero to a map $\hat h\colon\C\to\C$, then $\hat h$ is continuous in $U$.
In the set $\Omega\cap U$ the function $\hat h$ is holomorphic and outside this set $\hat h=0$.
By Rado's theorem $\hat h$ is holomorphic in all of $U$.
By virtue of vanishing in the open subset $U\setminus\bar\Omega$, the function $\hat h$ vanishes identically in $U$.
Now the original $h|_\Omega$ is holomorphic and vanishes in the open set $\Omega\cap U$, so $h=0$ as claimed.
(If $\Gamma=\partial\Omega$, one can simply use the maximum principle to make $h$ vanish.)

When $n\geq3$, Liouville's theorem forces $f$ to be a M\"obius transformation. As such $f$ is smooth up to the boundary.
Fix any point $p\in\Gamma$.
As $f$ is a smooth map with $f(p)=p$, the differential $Df(p)$ is a composition of rotation, scaling, and possible reflection in $T_p\R^n\to T_p\R^n$.
This differential is the identity on $T_p\Gamma\subset T_p\R^n$ because $f$ fixes all points on $\Gamma$, so $Df(p)$ is either the identity map or a reflection across $T_p\Gamma$.
The latter is ruled out by $f(\Omega)\subset\bar\Omega$, so indeed $Df(p)=I$.

A general M\"obius transformation $f$ of $\overline{\R^n}$ (the one-point compactification of the Euclidean space) is of the form
\begin{equation}
f(x) = b + \lambda\abs{x-a}^{-\eps} Q(x-a)
\end{equation}
with $a,b\in\R^n$, $\lambda\in\R\setminus\{0\}$, $Q\in O(n)$, and $\eps\in\{0,2\}$.
The derivative is
\begin{equation}
Df(x) = \lambda\abs{x-a}^{-\eps} QR_{x-a}
\end{equation}
with $R_{y}=I-\eps\abs{y}^{-2}yy^T$.
For $f$ to be continuous on $\bar\Omega$, we have $\eps=0$ or $a\notin\bar\Omega$.

Suppose $\eps=2$.
The matrix $R_y$ is the reflection across the hyperplane orthogonal to $y$, so $R_{y}^2=I$ and $R_{y}\in O(n)$.
As $Df(p)$ is a multiple of the identity and $Q\in O(n)$, we find that $Q=R_{p-a}$.
This holds for all $p\in\Gamma$, so the vector $\eta(p)=\frac{p-a}{\abs{p-a}}$ is (locally) constant on $\Gamma$.
The level sets of $\eta$ are one-dimensional and $\dim(\Gamma)=n-1>1$, which leads to a contradiction.
Therefore $\eps=0$.

We are then left with $f(x)=b+\lambda Q(x-a)$.
With the constraint $f(p)=p$ and $Df(p)=I$ at a point $p\in\Gamma$, the only remaining option is indeed $f(x)=x$.
\end{proof}

\subsection{Proof of Corollary \ref{cor:interior_sources_enclosure_measurement}}
Since $\rec=B_e(\Omega,\epsilon)\setminus \bar \Omega$ and $\O=\source$ we have that $T>M \diam_e(\rec)$ yields $T>M \diam_e(\rec \cup \source)$ and we have due to Theorem~\ref{thm:reduction} that the equation \eqref{eq:TTT_agree} is true.
From here we aim to first show that the assumption $c_1=c_2$ in $\R^n\setminus \O$,  yields
\begin{equation}
    \label{eq:TTD_extended}
\TTD{c_1}{\R^n,\rec}(\R^n)
=
\TTD{c_2}{\R^n,\rec}(\R^n).
\end{equation}
Let $z \in \R^n\setminus \source$ and $w \in \rec$. Since the metric $g_{c_1}$ is complete there is a $g_{c_1}$-distance minimizing geodesic $\gamma$ from $z$ to $w$. If $\gamma$ does not touch $\O$ we have that 
\[
d_{c_1}(z,w)
=
\operatorname{length}_{c_1}(\gamma)
=
\operatorname{length}_{c_2}(\gamma)
\geq 
d_{c_2}(z,w). 
\]
Hence, without loss of generality we may assume that $\gamma$ goes through $\O$. Since $\source=\O$ and $\rec=B_e(\Omega,\epsilon)\setminus \bar \Omega$ we can choose two points $x \in \bar \rec$ and $y_1 \in \source$ on $\gamma$ appearing in the order
\[
z\leq  x<y_1<w, 
\] 
and $\gamma|_{[z,x]}\cap \O$ is empty. Since $c_1=c_2$ outside $\O$ we have that 
\[
d_{c_1}(z,x)\geq d_{c_2}(z,x).
\]
Then we get from \eqref{eq:TTT_agree} and the continuity of the distance function that there is $y_2 \in \source$ such that
\[
d_{c_1}(y_1,\cdot)|_{\bar \rec}=d_{c_2}(y_2,\cdot)|_{\bar \rec}.
\]
Therefore,
\[
d_{c_1}(z,w)
=
d_{c_1}(z,x)+d_{c_1}(x,y_1)+d_{c_1}(y_1,w)
\geq 
d_{c_2}(z,x)+d_{c_2}(x,y_2)+d_{c_2}(y_2,w)
\geq 
d_{c_2}(z,w).
\]
Hence, by interchanging the roles of $g_{c_1}$ and $g_{c_2}$ we have proved that 
\[
d_{c_1}(z,w)
=
d_{c_2}(z,w),
\quad 
\text{for all }
z\in \R^n \setminus \source, \: w \in \rec.
\]
The equation \eqref{eq:TTD_extended} has been verified.
Thus, equation \eqref{eq:TTD_extended} with \cite[Prop.~3.11, 3.13., 3.14, and Theorem~3.15]{saksala2025inverse}
imply that there is a diffeomorphism $\phi\colon \R^n \to \R^n$ such that in $\rec$ this map is the identity and $\phi^\ast g_{c_1}=g_{c_2}$. Since $g_{c_i}=c_i^{-2}e$ we have that $\phi$ is a conformal map, and therefore real analytic. The claim of Corollary~\ref{cor:interior_sources_enclosure_measurement} holds true since the real analytic map 
$
\R^n \ni z\mapsto \phi(z)-z
$
vanishes in the open set $\rec$.

\subsection{Proof of Corollary \ref{cor:enclosure_sources_enclosure_measurement}}
Since $(\bar \O,g_{c_i})$ is simple for both $i \in\{1,2\}$ the domain $\O$ is bounded, and due to the choices $\rec=\source=B_e(\O,\epsilon)\setminus \bar \O$ and $T>M\diam_e(\rec)$ we have that $T>M\diam_e(\rec \cup \source)$. Thus, Theorem~\ref{thm:reduction} implies the identity \eqref{eq:TTT_agree}. Our aim is to prove that
\begin{equation}
    \label{eq:boundary_dist_func_agree}
d_{c_1}(w,z)=d_{c_2}(w,z), 
\quad
\text{ for all } w,z \in \p \O.
\end{equation}
If so, then \cite[Theorem C]{croke1991rigidity}, boundary rigidity in a conformal class of simple metrics, implies $c_1=c_2$ in $\O$.

Let $y_1\in \source$. Due to $\eqref{eq:TTT_agree}$ it follows that there is $y_2 \in \source$ such that
\[
d_{c_1}(y_1,\dummy)|_{\rec}
=
d_{c_2}(y_2,\dummy)|_{\rec}.
\]
Since $\source=\rec$ the former equation yields $y_1=y_2$. Hence, we have proved that
\[
d_{c_1}(y,z)=d_{c_2}(y,z), \quad 
\text{for all } y,z \in \source.
\]
Since $\p \O \subset \p \source$ the equation \eqref{eq:boundary_dist_func_agree} follows from continuity of the distance function.

\subsection{Proof of Corollary \ref{cor:interior_sources_partial_measurement}}
We denote the  $\p \O$-interior of the set $\bar \rec \cap \p \O$ by $\Gamma$. By assumption this set is not empty. Since $\O$ is a smooth bounded and strictly convex domain of $\R^n$, the assumptions $c_1, c_2 \in C^\infty(\R^n)$ and $c_1|_{\R^n\setminus \Omega} = c_2|_{\R^n\setminus \Omega}=1$ yield that $\Omega$ is strictly convex with respect to $g_{c_1}$ and $g_{c_2}$ (see for instance \cite[proof of Lemma 2.3]{IlmavirtaSaksalaYan2026}). Since   $T>M\diam_e(\source \cup \rec)$, we get from Theorem~\ref{thm:reduction} that the equation \eqref{eq:TTT_agree} is valid. 

For both $i\in \{1,2\}$ we use the notation $d_{c_i}^\O$ for the intrinsic distance function between points in $\bar \O$. That is $d_{c_i}^\O(x,y)$ is the infimum of $c_i$-lengths of all curves that connect $x$ to $y$ but are contained in $\bar \O$. Hence we have that
$
d_{c_i}^\O(x,y)\geq d_{c_i}(x,y)
\text{ for all } x,y \in \bar \O.
$

If we can show that 
\begin{equation}
    \label{eq:TTD_Restriction_on_Gamma}
\{
d_{c_1}^\O(y,\dummy)|_{\Gamma}: \: y \in \bar \O
\}
=
\{
d_{c_2}^\O(z,\dummy)|_{\Gamma}: \: z \in \bar \O
\}
\end{equation}
then due to \cite[Theorem 1.2 and Lemma 5.1]{Pavlechko2022} there is a diffeomorphism $\phi$ of $\bar \O$ such that $\phi(z)=z$ on $\Gamma$ and $g_{c_1}=\phi^\ast g_{c_2}$.
Hence, $\phi$ is a conformal map of $\Omega$ that agrees with the identity map on the relatively open subset $\Gamma \subset \partial \Omega$. So it follows from Lemma~\ref{lma:boundary-fixing-conformal} that $\phi$ is the identity and $c_1=c_2$.

Let $i\in \{1,2\}$, $y \in \bar \O$, and $z \in \Gamma$. Since $\R^n$ with the metric $g_{c_i}$ is complete there is a $c_i$-distance minimizing geodesic $\gamma_i$ from $y$ to $z$. Since $z\in \p \O$ the curve $\gamma_i$ has to touch the boundary of $\O$ at least once. If at any of those points the velocity of $\gamma_i$ is outward pointing (or tangential to $\p \O$) then due to strict convexity of $\O$ the curve $\gamma_i$ must exit $\O$ at least briefly. Since $c_i\equiv 1$ outside $\O$ it follows that in $\R^n \setminus \O$ the curve $\gamma_i$ is a straight line that will never return to the strictly convex set $\O$. Thus, $z$ must be the only point of $\p \O$ where $\dot \gamma_i$ is outward pointing. In particular, $\gamma_i$ stays in $\bar \O$ all the way from $y$ to $z$. Therefore, $d_{c_i}^\O(y,z)=d_{c_i}(y,z)$, and  the equation \eqref{eq:TTD_Restriction_on_Gamma} follows from \eqref{eq:TTT_agree}.

\appendix

\section{Proofs of secondary lemmas}
\label{app:postponed-proofs}

We omitted the proofs of two lemmas in the main text above, as the results are not new.
As the proofs may be instructive to readers not well versed in the field, we provide proofs for completeness in this appendix.

\begin{proof}[Proof of Lemma~\ref{lemma:globalhyp}]
    Let 
    $$
    \bar \gamma(s) = (t(s), x(s)), \qquad a < s < b,
    $$
    be a piecewise $C^1$ inextendible future-directed causal curve in $\R \times \R^n$. On each $C^1$ segment, the future-directedness condition implies $\dot{t}(s)>0$. This guarantees that $t(s)$ is a strictly increasing function of $s$. So we may reparametrize $\bar \gamma$ with respect to $t$ to get the piecewise $C^1$ curve
    $$
    \widetilde{\gamma}(t) = \bar \gamma(s(t)) = (t, \widetilde{x}(t)), \qquad \widetilde{a} < t < \widetilde{b}.
    $$
    Next, the causality assumption
    $
    -1+|\dot{\widetilde{x}}(t)|_g^2 \leq 0
    $
    implies
    $
    |\dot{\widetilde{x}}(t)|_{g_c}\leq 1
    $
    at every point where $\tilde x$ is differentiable. Consequently,
    \begin{equation}\label{eq:gammacauchy}
        d_c(\widetilde{x}(t_1), \widetilde{x}(t_2)) \leq \int_{t_1}^{t_2} |\dot{\widetilde{x}}(t)|_{g_c}\, dt \leq |t_2-t_1| \qquad \text{whenever } \widetilde{a} < t_1 \leq t_2 <\widetilde{b}.
    \end{equation}
    
    We claim that $\widetilde{a}=-\infty$ and $\widetilde{b}=+\infty$. Indeed, suppose on the contrary that $\widetilde{b}<+\infty$. Then \eqref{eq:gammacauchy} implies that $(\widetilde{x}(t))$ is Cauchy as $t\to \widetilde{b}$. Since $(\R^n,g_c)$ is complete by the assumption \eqref{eq:uniform_sound_speed}, there exists $x_b\in \R^n$ such that $\widetilde{x}(t)\to x_b$ as $t\to \widetilde{b}$. 
    This implies that $\widetilde{\gamma}$ can be extended past $\widetilde{b}$ by the timelike curve $t \mapsto (t, x_b)$, which contradicts the assumption that $\bar \gamma$ is inextendible. So we conclude that $\widetilde{b} = +\infty$. Similar arguments show that $\widetilde{a}=-\infty$. It follows therefore that for all $t\in \R$, $\bar \gamma$ meets $\Sigma_t = \{t\}\times \R^n$ exactly once. The argument for past-directed causal curves is analogous.
\end{proof}

\begin{proof}[Proof of Lemma~\ref{lemma:projection_to_null_geos}]
Let
$
\Gamma(s)=\bigl(t(s),x(s),\tau(s),\xi(s)\bigr)
$
be an integral curve of the Hamiltonian vector field $H_p$. That is
\begin{equation}
\label{eq:int_curve}
\dot \Gamma(s)=H_p(\Gamma(s)).
\end{equation}
By \eqref{eq:principalsymb} and \eqref{eq:Hamiltonian-vector-field}, the Hamiltonian vector field takes the form
\[
H_p
=
-2\tau\,\partial_t
+
2c(x)^2\sum_{j=1}^n\xi_j\partial_{x_j}
-
|\xi|^2\sum_{j=1}^n
\partial_{x_j}\bigl(c(x)^2\bigr)\partial_{\xi_j}.
\]
Thus, we have from \eqref{eq:int_curve} that
the respective Hamilton equations are
\begin{equation}
\label{eq:Hamilton-equations_1}
\begin{aligned}
\dot{t}
&=
\partial_\tau p
=
-2\tau,
\qquad
\dot{x}
=
\partial_\xi p
=
2c(x)^2\xi,
\\
\dot{\tau}
&=
-\partial_t p
=
0,
\qquad
\dot{\xi}
=
-\partial_xp
=
-2c(x)|\xi|^2\nabla c(x).
\end{aligned}
\end{equation}

We show that the curve $(t(s),x(s))$ satisfies the geodesic equation of $\bar g$. The time component of the geodesic equation follows immediately from
\eqref{eq:Hamilton-equations_1}, since
\[
\ddot{t}=-2\dot{\tau}=0.
\]
To obtain the spatial geodesic equation, we differentiate the identity
\[
\dot{x}=2c(x)^2\xi,
\]
that gives
\[
\ddot{x}
=
4c(x)\bigl(\nabla c(x)\cdot\dot{x}\bigr)\xi
+
2c(x)^2\dot{\xi}.
\]
Using \eqref{eq:Hamilton-equations_1} we obtain
\[
\ddot{x}
=
\frac{2}{c(x)}
\bigl(\nabla c(x)\cdot\dot{x}\bigr)\dot{x}
-
\frac{|\dot{x}|^2}{c(x)}\nabla c(x).
\]
This can be written equivalently as the geodesic equation 
\begin{equation}
\label{eq:spatial-geodesic-equation}
\ddot{x}
-
2\bigl(\nabla\log c(x)\cdot\dot{x}\bigr)\dot{x}
+
|\dot{x}|^2\nabla\log c(x)
=
0,
\end{equation}
for the conformally Euclidean metric $g_c$. Thus the base projection $\bar\gamma(s)=\bigl(t(s),x(s)\bigr)$ of $\Gamma(s)$ is a geodesic of $\bar g$.

Using \eqref{eq:Hamilton-equations_1} again we have
\begin{equation*}
\overline{g}(\dot{\bar\gamma},\dot{\bar\gamma})
=
-\dot{t}^{\,2}
+
c(x)^{-2}|\dot{x}|^2
=
4p(\Gamma)=0,
\end{equation*}
where the last equation holds since $\Gamma$ is a null bicharacteristic. Therefore, the base projection of a null bicharacteristic of $p$ is a null geodesic of $\overline{g}$.
\end{proof}

\section{A Fourier integral operator proof of Proposition~\ref{prop:llc}} \label{app:FIO-proof}

Here we give an alternative proof of Proposition~\ref{prop:llc} using an explicit local Fourier integral operator parametrix for the Cauchy problem. We first collect the additional material on FIOs needed for that argument.

\subsection{The local FIO parametrix}

We first recall the local Fourier integral representation of the solution operators for the Cauchy problem \eqref{general-ivp}. Following \cite[Ch.~6]{Gr-Sj}, we say that a differential operator $Q$ of order $m$ on $\R_t\times\R^n_x$ with principal symbol $q(t,x,\tau,\xi)$ is \emph{strictly hyperbolic} with respect to $t$ if for every $(t,x,\xi)$ with $\xi\neq 0$, the equation $q(t,x,\tau,\xi)=0$ has exactly $m$ distinct real roots in $\tau$. It is easy to see from \eqref{eq:principalsymb} that $p(t,x,\tau,\xi)=0$ if and only if
  \begin{equation}\label{eq:characteristicroots}
     \tau = \lambda_{\pm}(t,x,\xi) := \mp c(x)|\xi|.
  \end{equation}
The sign convention is chosen so that $\{\tau = \lambda_{+}(t,x,\xi)\}$ corresponds to the forward branch in time, since the Hamiltonian flow of $p(t,x,\tau,\xi)$ on this branch satisfies $\dot{t} = \partial_\tau p = -2\tau >0$. Since $c(x)>0$, these roots are real and distinct for $\xi \neq 0$. Thus $P_{c,\alpha}$ is strictly hyperbolic with respect to $t$. The local parametrix construction in \cite[Ch.~6, in particular, Theorem~6.2 and its proof]{Gr-Sj} yields the following consequence.

\begin{Lemma}\label{lemma:parametrix}
Let $U_0 \subset \R^n$ be a relatively compact open subset. Then there exists an open set $U \subset \R\times \R^n$ with $U \cap (\{0\}\times \R^n) = \{0\}\times U_0$, for which we can construct operators
$$E_k: \mathcal{E}'(U_0) \to \mathcal{D}'(U), \qquad k=0,1, $$
such that for all $f,g\in \mathcal{E}'(U_0)$, the solution $u$ of \eqref{general-ivp} can be expressed in $U$ as
\begin{equation*}
u = E_0f + E_1g + r,
\end{equation*}
where $r \in C^\infty(U)$. In fact, $E_0,E_1$ are sums of Fourier integral operators:
\begin{equation}\label{eq:FIOrepresentation}
E_k h(t,x) = \sum_{\pm} E_{k,\pm}h(t,x) = \frac{1}{(2\pi)^n} \sum_{\pm}\int_{\R^n} e^{i\varphi_{\pm}(t,x,\eta)}a_{k,\pm}(t,x,\eta)\widehat{h}(\eta) \, d\eta, \qquad k =0,1,  \end{equation}
where $\varphi_{\pm}$ are real-valued phase functions (in the sense of \cite[Def.~1.10]{Gr-Sj}), and $a_{k,\pm}(t,x,\eta)$ are symbols of order $-k$ and type $(1,0)$ (in the sense of \cite[Def.~1.1]{Gr-Sj}).
\end{Lemma}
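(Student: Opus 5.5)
The plan is to derive Lemma~\ref{lemma:parametrix} from the classical parametrix for strictly hyperbolic Cauchy problems in \cite[Ch.~6]{Gr-Sj}, combined with the uniqueness statement of Lemma~\ref{well-posedness}. The idea is to construct approximate solution operators locally near $\{0\}\times U_0$, and then show that the true distributional solution differs from them only by a smooth error.

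First, since $P_{c,\alpha}$ is strictly hyperbolic with respect to $t$, we factor the characteristic roots $\lambda_\pm=\mp c(x)|\xi|$. For each branch we solve the eikonal equation
\[
\partial_t\varphi_\pm=\lambda_\pm(x,\nabla_x\varphi_\pm),\qquad \varphi_\pm(0,x,\eta)=x\cdot\eta .
\]
The Hamilton--Jacobi theory gives a smooth, homogeneous of degree one, real solution for $|t|$ small, uniformly for $x$ in a neighborhood of $\bar U_0$. This uses compactness of $\bar U_0$ and $c\in C^\infty$ with \eqref{eq:uniform_sound_speed}. Shrinking if necessary, we obtain an open set $U$ meeting $\{t=0\}$ exactly in $\{0\}\times U_0$ on which $\nabla_x\varphi_\pm\neq0$, so that the $\varphi_\pm$ are nondegenerate phase functions. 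Next we solve the transport equations recursively to produce amplitudes $a_{k,\pm}\sim\sum_j a_{k,\pm}^{(j)}$, with $a^{(j)}$ homogeneous of degree $-k-j$, chosen so that
\[
\textstyle\sum_\pm a_{0,\pm}|_{t=0}=1,\quad \sum_\pm \partial_t(e^{i\varphi_\pm}a_{0,\pm})|_{t=0}\in C^\infty\text{-error},
\]
and analogously for $E_1$, where the roles of position and velocity data are swapped, giving order $-1$. We Borel-sum these and cut off near $\eta=0$. This is exactly \cite[Theorem~6.2]{Gr-Sj}. The outcome is that
\[
P_{c,\alpha}E_k:\mathcal E'(U_0)\to C^\infty(U),
\]
and that the Cauchy data of $E_0f+E_1g$ at $t=0$ equal $(f,g)$ modulo smoothing operators applied to $(f,g)$.

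The main obstacle is the comparison step: identifying the true solution $u$ with $E_0f+E_1g$ modulo $C^\infty(U)$. The remainder $w=u-E_0f-E_1g$ satisfies $P_{c,\alpha}w\in C^\infty(U)$, and its Cauchy data are smooth. However, $u$ is only defined through the approximation property of Lemma~\ref{well-posedness}, and $U$ is not a full slab. I would proceed in four steps.

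\begin{enumerate}
\item Arrange $U$ to be a domain of dependence, i.e.\ a lens-shaped region over $U_0$ bounded by $|t|<\epsilon\, d_c(x,\R^n\setminus U_0)$. Finite speed of propagation (Lemma~\ref{sharp}) then localizes the problem.
\item Approximate $f,g$ by $f_j,g_j\in C_c^\infty(U_0)$. For smooth data, the classical energy estimates in the domain of dependence show that $u_j-E_0f_j-E_1g_j$ is controlled in every $C^m(U')$, $U'\Subset U$, by the smoothing remainders applied to $(f_j,g_j)$.
\item Because smoothing operators are continuous from $\mathcal E'$ to $C^\infty$, these bounds are uniform in $j$, so the remainders converge in $C^\infty$.
\item Since $u_j\to u$ and $E_kf_j\to E_kf$ in $\mathcal D'$, the limit $r=u-E_0f-E_1g$ is smooth.
\end{enumerate}

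An alternative for the last step is to invoke the $C^\infty$ well-posedness of the inhomogeneous Cauchy problem with smooth data, applied to $w$ after cutting off. This still requires knowing that the distributional $u$ coincides with the limit solution, which is precisely what the uniqueness clause of Lemma~\ref{well-posedness} provides.
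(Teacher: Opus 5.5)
Your proposal is correct and takes the same route as the paper, which simply invokes the strictly hyperbolic parametrix of \cite[Ch.~6, Theorem~6.2 and its proof]{Gr-Sj}: eikonal equations \eqref{eq:eikonal}, transport equations, and Borel summation. Your extra comparison step correctly supplies the identification, modulo $C^\infty(U)$, of the distributional solution of Lemma~\ref{well-posedness} with the parametrix, which the paper leaves implicit in the citation. That step uses smooth approximants supported in a fixed compact subset of $U_0$ (needed so the smoothing remainders converge in $C^\infty$), energy estimates applied to differences on a lens-shaped domain of dependence, and a passage to the limit via the approximation property.
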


The phase functions $\varphi_{\pm}$ correspond to the two roots $\tau = \lambda_{\pm}(t,x,\xi)$ of $p(t,x,\tau,\xi)=0$ in the sense of satisfying \cite[Eq.~$(6.5)_\nu$ and (6.6)]{Gr-Sj}. In the present case, these equations become
\begin{equation}\label{eq:eikonal}
    \partial_t\varphi_{\pm} = \mp c(x)|\nabla_x \varphi_{\pm}|, \qquad \varphi_{\pm}(0,x,\eta) = x\cdot \eta,
\end{equation}
which can be uniquely solved in a suitably chosen domain $U$ using standard Hamilton-Jacobi theory (see, for instance, \cite[Ch.~5]{Gr-Sj}).

The amplitudes $a_{k,\pm}(t,x,\eta)$ are classical symbols of order $-k$, i.e., they admit asymptotic expansions
\begin{equation} \label{eq:asymptoticexpansion}
    a_{k,\pm} \sim \sum_{j=0}^\infty a_{k,\pm}^{-k-j}(t,x,\eta),
\end{equation}
where $a_{k,\pm}^{-k-j}$ are positively homogeneous of degree $-k-j$ in $\eta$. As usual, these terms are required to be homogeneous only away from $\eta=0$; multiplying the terms by a cut-off near $\eta=0$ may be needed to ensure global smoothness, but this changes the corresponding operators only by smoothing terms. 

The leading order terms $a^{-k}_{k,\pm}$, $k=0,1$, are found by imposing the Cauchy conditions on $E_0,E_1$ at the principal-symbol level, and then solving the corresponding transport equations \cite[Eq.~(6.7)]{Gr-Sj}. For $E_0$, the Cauchy conditions
$$E_0|_{t=0} = \id \quad \text{and} \quad \partial_tE_0|_{t=0} = 0 \quad \text{modulo smoothing terms},$$
give, by equating the principal symbols of both sides, 
$$a^0_{0,+}(0,x,\eta)+a^0_{0,-}(0,x,\eta) = 1,$$
and 
$$-ic(x)|\eta|a^0_{0,+}(0,x,\eta)+ ic(x)|\eta|a^0_{0,-}(0,x,\eta) = 0.$$
Here we have used the fact that by \eqref{eq:eikonal}, $\partial_t\varphi_+(0,x,\eta) = -c(x)|\nabla_x(x\cdot\eta)| = -c(x)|\eta|$. Thus,
\begin{equation}\label{eq:principalamplitudesE0}
    a^0_{0,+}(0,x,\eta) = a^0_{0,-}(0,x,\eta) = \frac{1}{2}.
\end{equation}
Similarly for $E_1$, the Cauchy conditions
$$E_1|_{t=0} = 0 \quad \text{and}\quad \partial_tE_1|_{t=0} = \id \quad \text{modulo smoothing terms}$$
give
$$a^{-1}_{1,+}(0,x,\eta)+a^{-1}_{1,-}(0,x,\eta) = 0,$$
and 
$$-ic(x)|\eta|a^{-1}_{1,+}(0,x,\eta)+ ic(x)|\eta|a^{-1}_{1,-}(0,x,\eta) = 1.$$
Therefore,
\begin{equation}\label{eq:principalamplitudesE1}
    a^{-1}_{1,\pm}(0,x,\eta) = \pm \frac{i}{2c(x)|\eta|} \qquad \text{for }\eta \text{ away from }0.
\end{equation}
In particular, \eqref{eq:principalamplitudesE0} will be used in the proof of Proposition \ref{prop:llc}. The remaining lower-order terms in \eqref{eq:asymptoticexpansion} can be found by successively solving similar transport equations; see \cite[Ch.~6]{Gr-Sj} for details.

\subsection{Canonical relations and wave front set mapping}\label{subsec:FIOprelim}

We next recall the notion of canonical relations of FIOs, and their relation to the wave front set mapping properties of the corresponding FIOs. We largely follow the notation and terminology of \cite{HormFIO}.

Let $X \subset \R^{n_X}$ and $Y \subset \R^{n_Y}$ be open subsets. Consider a Fourier integral operator $A:\mathcal{E}'(Y)\to \mathcal{D}'(X)$ with Schwartz kernel of the form (cf. \cite[Theorem~1.4.1]{HormFIO})
$$K_A(x,y) = \frac{1}{(2\pi)^N}\int_{\R^N} e^{i\Phi(x,y,\theta)}a(x,y,\theta)\, d\theta,$$
where $\Phi$ is an operator phase function on $X\times Y\times\R^N$ in the sense of \cite[Def.~1.4.4]{HormFIO}, and $a$ is a symbol on the same space of type $(1,0)$. If $\Phi$ is non-degenerate (i.e., the differentials $d\left(\partial\Phi/\partial \theta_j \right)$ are linearly independent whenever $\partial_\theta \Phi = 0$), it is associated with a  Lagrangian submanifold of $T^*(X\times Y) \setminus 0$ given by
\begin{equation}\label{eq:associatedlagrangian}
    \Lambda_\Phi = \left\{(x,y,\partial_x\Phi, \partial_y \Phi) \, : \, \partial_\theta \Phi = 0 \right\}.
\end{equation}
This is precisely the image of the map described in \cite[Eq.~(3.1.1)--(3.1.2)]{HormFIO}. The corresponding homogeneous canonical relation (see \cite[Def.~4.1.2]{HormFIO}) is
$$C_\Phi = \left\{ \left((x, \partial_x \Phi), (y, -\partial_y \Phi)\right) \, : \, \partial_\theta \Phi = 0 \right\} \subset (T^*X \setminus 0)\times (T^*Y \setminus 0).$$
We will need the wave front set mapping properties for such operators. Assume that $a(x,y,\theta)$ vanishes near the zero section; this can be arranged without changing $A$ modulo smoothing terms. Then by \cite[Prop.~2.5.7]{HormFIO}, we have 
\begin{equation}\label{eq:WFKA}
    WF(K_A) \subset \Lambda_\Phi.
\end{equation}
By definition of operator phase functions, $\Phi(x,y,\theta)$ has no critical points in $(y,\theta)$ (resp., $(x,\theta)$) for each $x$ (resp., $y$). Consequently $\partial_x\Phi$ and $\partial_y\Phi$ are both non-vanishing on the critical set $\{\partial_\theta \Phi = 0\}$. Together with \eqref{eq:WFKA}, this implies that the sets
\begin{align*}
    WF_X(K_A) &:= \{(x,\xi)\in T^*X \, : \, (x,y,\xi,0) \in WF(K_A) \text{ for some }y\in Y\}, \quad \text{and} \\
    WF_Y(K_A) &:= \{(y,\eta) \in T^*Y \, : \, (x,y,0,\eta) \in WF(K_A) \text{ for some }x \in X\}
\end{align*}
are both empty. So it follows from \cite[Theorem~2.5.14]{HormFIO} that for any $u \in \mathcal{E}'(Y)$,
\begin{equation}\label{eq:WFmappingforFIO}
   WF(Au) \subset C_\Phi \circ WF(u), 
\end{equation}
where the right-hand side is interpreted as the action of a relation on a set, i.e.,
$$ C_\Phi\circ \Gamma = \left\{(x,\xi)\in T^*X\setminus 0: \text{ there exists } (y,\eta)\in \Gamma \text{ with } \left((x,\xi);(y,\eta)\right)\in C_\Phi \right\}. $$

Now consider the FIO parametrices $E_{0,\pm}$ defined in \eqref{eq:FIOrepresentation}. Their Schwartz kernels $K_{0,\pm}\in \mathcal{D}'(U\times U_0)$ are given by
$$K_{0,\pm}(t,x,y) = \frac{1}{(2\pi)^n}\int_{\R^n}e^{i\Phi_{\pm}(t,x,y,\eta)}a_{0,\pm}(t,x,\eta) \, d\eta, \qquad \text{where }\Phi_{\pm} := \varphi_\pm(t,x,\eta)-y\cdot \eta.$$
The differentials $d(\partial \Phi_{\pm}/ \partial \eta_j)$ are linearly independent since the matrix $\left(\partial^2\Phi_{\pm}/\partial y_j \partial \eta_k\right) = -\id$, so $\Phi_\pm$ are indeed non-degenerate phase functions. We now compute their canonical relations. Imposing the stationary condition on $\Phi_{\pm}$ in $\eta$, we get
$$\partial_\eta \Phi_{\pm} = 0 \qquad \text{if and only if} \qquad y = \partial_\eta\varphi_{\pm}(t,x,\eta).$$
Therefore, the corresponding canonical relations are
\begin{equation}\label{eq:canonicalrelations}
    C_{\pm} = \left\{ \left((t,x,\partial_t\varphi_{\pm}, \partial_x\varphi_{\pm});(y,\eta)\right) \, : \, y = \partial_\eta\varphi_{\pm}\right\},
\end{equation}
 and for any $u \in \mathcal{E}'(U_0)$,
\begin{equation}\label{WFmappingforE}
    WF(E_{0,\pm}u) \subset C_{\pm}\circ WF(u).
\end{equation}
Note in particular that at $t=0$,
\begin{equation}\label{eq:canonicalrelationat0}
    (\left(0,y,\mp c(y)|\eta|,\eta);(y,\eta)\right) \in C_{\pm}\qquad \text{for all }(y,\eta) \in T^*U_0 \setminus 0.
\end{equation}

\subsection{Proof of the wave front set characterization}

\begin{proof}[Alternative proof of Proposition~\ref{prop:llc}]
    Since $A \neq 0$, the wave front set of $u^{y,A}=Au^{y,1}$ coincides with the wave front set of $u^{y,1}$. So we assume without loss of generality that $A = 1$ and write $u^y = u^{y,1}$. We first prove the inclusion
    \begin{equation}\label{eq:global-wf-cone}
    WF(u^y) \subset LLC_{g_c}(0,y).
    \end{equation}
    By Lemma \ref{lemma:parametrix}, there exists a relatively compact neighborhood $U$ of $(0,y)$ in which 
    \begin{equation}\label{eq:parametrixinU}
    u^y = E_{0,+}\delta_y +E_{0,-}\delta_y\qquad \text{modulo }C^\infty(U).
    \end{equation}
    Choose $\eps>0$ small enough that
    $$\{(t,x) \, : \, |t| < \eps, \, d_c(x,y) \leq |t|\} \subset U.$$
    Then by Lemma \ref{sharp},
    $$\supp(u^y) \cap \{|t| < \eps\} \subset U.$$
    Hence every element of $WF(u^y)$ over the strip $\{|t|<\eps\}$ lies in $T^*U$. So \eqref{eq:parametrixinU} together with the wave front set mapping property \eqref{WFmappingforE} gives
    \begin{equation}\label{eq:WFupperboundnearU}
        WF(u^y)\cap \{|t|<\eps\} \subset (C_+\circ WF(\delta_y))\cup (C_{-}\circ WF(\delta_y)),
    \end{equation}
    where $C_\pm$ are the canonical relations defined in \eqref{eq:canonicalrelations}. Since 
    $$WF(\delta_y) = \{(y,\eta) \in T^*_y\R^n \, : \, \eta \neq 0 \},$$
    we have
    $$C_{\pm}\circ WF(\delta_y) = \left\{\left(t,x,\partial_t\varphi_\pm(t,x,\eta),\partial_x\varphi_\pm(t,x,\eta)\right) \, : \, \partial_\eta\varphi_\pm(t,x,\eta) = y, \, \eta \neq 0 \right\}.$$
    At $t=0$, the eikonal equations and their initial conditions in \eqref{eq:eikonal} imply
    $$\partial_t\varphi_\pm(0,x,\eta) = \lambda_\pm(x,\eta) = \mp c(x)|\eta|,\qquad \varphi_\pm(0,x,\eta) = x\cdot \eta, \qquad \partial_\eta\varphi_{\pm}(0,x,\eta) = x,$$
    which show that 
    $$\Lambda^0_\pm :=\{(0,y, \lambda_{\pm}(y,\eta), \eta) \, : \, \eta \neq 0\} \subset C_\pm \circ WF(\delta_y).$$
    We claim that near $t=0$, $C_\pm \circ WF(\delta_y)$ are precisely the flow-outs of $\Lambda^0_\pm$ under the Hamiltonian vector field of $p= -\tau^2+c(x)^2|\xi|^2$. Indeed, let 
		$$q_\pm(t,x,\tau,\xi)=\tau-\lambda_\pm(x,\xi) = \tau\pm c(x)|\xi|.$$
	Then $p=-q_+q_-$, and the eikonal equations \eqref{eq:eikonal} become
	$$q_\pm(t,x,\partial_t\varphi_\pm,\partial_x\varphi_\pm)=0.$$
	  Now fix $\eta\neq 0$. The equation $\partial_\eta\varphi_\pm(t,x,\eta)=y$ can be solved locally for $x=x_\pm(t)$ in terms of $t$, $y$ and $\eta$ near $t=0$, since
	$$\partial_x\partial_\eta\varphi_\pm(0,y,\eta)=I.$$
	Set
	$$\xi_\pm(t)=\partial_x\varphi_\pm(t,x_\pm(t),\eta), \qquad
		\tau_\pm(t)=\partial_t\varphi_\pm(t,x_\pm(t),\eta).$$
	Differentiating the identity
	$$\partial_\eta\varphi_\pm(t,x_\pm(t),\eta)=y$$
	with respect to $t$ and using the eikonal equations gives
	$$ \dot x_\pm(t) = -\partial_\xi\lambda_\pm(x_\pm(t),\xi_\pm(t)), \qquad \text{and} \qquad \dot\xi_\pm(t) = \partial_x\lambda_\pm(x_\pm(t),\xi_\pm(t)).$$
	Thus
	$$(t,x_\pm(t),\tau_\pm(t),\xi_\pm(t))$$
	is an integral curve of
	$$ H_{q_\pm} = \partial_t - \partial_\xi\lambda_\pm\cdot\partial_x + \partial_x\lambda_\pm\cdot\partial_\xi$$
	with initial point $(0,y,\lambda_\pm(y,\eta),\eta)$. Since $H_p$ is a nonzero scalar multiple of $H_{q_\pm}$ on $\Sigma_\pm=\{q_\pm=0\}$, these curves are also the null bicharacteristics of $p$, up to reparametrization. In other words, we have shown that, after possibly shrinking $U$, 
    $$(C_+\circ WF(\delta_y))\cup (C_-\circ WF(\delta_y))$$
    is the union of null bicharacteristics of $p$ in $T^*U$ passing over $(0,y)$. Combined with \eqref{eq:WFupperboundnearU}, we get
    \begin{equation}\label{eq:local-wf-cone}
        WF(u^y)\cap \{|t|<\eps\} \subset LLC_{g_c}(0,y).
    \end{equation}

    We now globalize this local inclusion. Let $\rho \in WF(u^y)$, and let $\Gamma$ be the maximally extended bicharacteristic of $p$ through $\rho$. By Lemma~\ref{propa}, the image of $\Gamma$ is contained in $WF(u^y)$. Its projection to $\R\times\R^n$ is a maximally extended null geodesic of $\bar g$ by Lemma~\ref{lemma:projection_to_null_geos}. So Lemma~\ref{lemma:globalhyp} implies that $\Gamma$ meets the Cauchy hypersurface $\{t=0\}$, and hence the strip $\{|t|< \eps\}$. Choose $\rho_0 \in \Gamma$ over this strip. By \eqref{eq:local-wf-cone}, $\rho_0 \in LLC_{g_c}(0,y)$. Since $LLC_{g_c}(0,y)$ is invariant under the Hamiltonian flow of $p$, the entire image of the bicharacteristic $\Gamma$ is contained in $LLC_{g_c}(0,y)$. In particular, $\rho \in LLC_{g_c}(0,y)$, which proves \eqref{eq:global-wf-cone}. 

    It remains to prove the reverse inclusion. Let $\rho_0 = (0,y,\tau_0,\eta_0)$ be an arbitrary element of $LLC_{g_c}(0,y)\cap T^*_{(0,y)}(\R\times\R^n)$. Suppose it belongs to the forward light cone $\Sigma_{+} = \{(t,x,\tau,\xi)\, : \, \tau = -c(x)|\xi|\}$, so that $\tau_0 = -c(y)|\eta_0|$ (the argument for the backward case is similar). Applying $E_{0,+}$ to $\delta_y$, we get
    $$E_{0,+}\delta_y(t,x) = \frac{1}{(2\pi)^n}\int_{\R^n} e^{i(\varphi_+(t,x,\eta)-y\cdot \eta)}a_{0,+}(t,x,\eta)\, d\eta.$$
    The associated Lagrangian manifold (see \eqref{eq:associatedlagrangian} in Subsection \ref{subsec:FIOprelim}) is given by
    $$\Lambda_{y,+} := \left\{(t, x, \partial_t\varphi_+(t,x,\eta), \partial_x \varphi_+(t,x,\eta))\, : \, \partial_\eta\varphi_+(t,x,\eta) = y \right\}.$$
    At time $t=0$, the stationary condition $\partial_\eta\varphi_+(t,x,\eta) = y$ reduces to $x=y$. Moreover, the eikonal equation and the initial condition for $\varphi_+$ give
    $$\partial_t \varphi_+(0,y,\eta_0) = -c(y)|\eta_0| = \tau_0, \qquad \partial_x \varphi_+(0,y,\eta_0) = \eta_0.$$
    Hence
    $$\rho_0 = (0,y,\tau_0, \eta_0) \in \Lambda_{y,+}.$$
    
    By the principal symbol characterization of Lagrangian distributions \cite[Sec.~3.2 and Theorem~3.2.6]{HormFIO}, the principal symbol of $E_{0,+}\delta_y$ at $\rho_0 \in \Lambda_{y,+}$ is represented, up to a nonzero normalization factor, by the principal amplitude $a_{0,+}^0(0,y,\eta_0)$. If $\rho_0 \notin WF(E_{0,+}\delta_y)$, then the principal symbol of $E_{0,+}\delta_y$ drops to a lower order near $\rho_0$ by \cite[Theorem~3.2.6]{HormFIO}. 
    However, we have already seen in \eqref{eq:principalamplitudesE0} that
    $$a_{0,+}^0(0,y,\eta) = \frac{1}{2} \qquad \text{for all } y \in \R^n, \, \eta \neq 0.$$
    So we conclude that $\rho_0 \in WF(E_{0,+}\delta_y)$. Moreover,
    $$WF(E_{0,-}\delta_y) \subset C_{-}\circ WF(\delta_y) \subset \{(t,x,\tau,\xi)\, : \, \tau = +c(x)|\xi|, \, \xi \neq 0 \},$$
    so $E_{0,-}\delta_y$ is smooth near $\rho_0$. Since
    $$u^y = E_{0,+}\delta_y + E_{0,-}\delta_y \qquad \text{mod $C^\infty$}$$
    near $(0,y)$, we conclude that $\rho_0 \in WF(u^y)$. Finally, let $\rho \in LLC_{g_c}(0,y)$ be arbitrary, and let $\Gamma$ be the maximally extended null bicharacteristic of $p$ through $\rho$. By assumption, $\Gamma$ passes over $(0,y)$. Let $\rho_0$ be the covector in $\Gamma$ over $(0,y)$. Then by the preceding argument, $\rho_0 \in WF(u^y)$. Now Lemma~\ref{propa} implies that $\rho \in WF(u^y)$ as well. Thus we have proved the reverse inclusion
    \begin{equation} \label{eq:reverse-inc}
        LLC_{g_c}(0,y) \subset WF(u^y).
    \end{equation} 
    Combining \eqref{eq:global-wf-cone} and \eqref{eq:reverse-inc}, we get
    \begin{equation}WF(u^y) = LLC_{g_c}(0,y),\end{equation}
    as claimed. 
\end{proof}

\bibliography{references2} 
\bibliographystyle{plain}

\end{document}